\newcommand\reallywidehat[1]{%
	\savestack{\tmpbox}{\stretchto{%
			\scaleto{%
				\scalerel*[\widthof{\ensuremath{#1}}]{\kern-.6pt\bigwedge\kern-.6pt}%
				{\rule[-\textheight/2]{1ex}{\textheight}}
			}{\textheight}%
		}{0.5ex}}%
	\stackon[1pt]{#1}{\tmpbox}%
}
\newcommand{\Addresses}{{
		\bigskip
		\footnotesize

	\noindent 	NGARTELBAYE GUERNGAR \\
		\textsc{Department of Mathematics, University of North Alabama,
		Florence, AL 35632}\\
		\textit{E-mail address}: \texttt{nguerngar@una.edu}\\
		\textit{URL}: \texttt{\url{	https://www.researchgate.net/profile/Ngartelbaye_Guerngar}}
	
		\medskip

		
	\noindent ERKAN NANE \\
		 \textsc{Department of Mathematics and Statistics, Auburn University,
		 	Auburn, AL 36849}\\
		\textit{E-mail address}: \texttt{ezn0001@auburn.edu}\\
		\textit{URL}: \texttt{\url{http://www.auburn.edu/~ezn0001}}
		
		\medskip

		\noindent RAMAZAN TINATZTEPE\\
		\textsc{Deanship of preparatory year and supporting studies, Imam Abdulrahman Bin Faisal University, Damman, KSA }\\
		\textit{E-mail address}: \texttt{ttinaztepe@iau.edu.sa}\\

		\noindent S\"ULEYMAN ULUSOY\\
		\textsc{ Department of Mathematics and Natural Sciences, American University of Ras Al Khaimah, Ras Al Khaimah, UAE}\\
		\textit{E-mail address}: \texttt{suleyman.ulusoy@aurak.ac.ae}\\
		\textit{URL}: \texttt{\url{	https://www.aurak.ac.ae/en/dr-suleyman-ulusoy}}

			\medskip
		\noindent HANS-WERNER VAN WYK \\
		\textsc{Department of Mathematics and Statistics, Auburn University,
			Auburn, AL 36849}\\
		\textit{E-mail address}: \texttt{hzv0008@auburn.edu}\\
		\textit{URL}: \texttt{\url{http://www.auburn.edu/~hzv008}}
		
		\medskip

		
}}
\newtheorem{theorem}{Theorem}[section]
\newtheorem{definition}[theorem]{Definition}
\newtheorem{lemma}[theorem]{Lemma}
\theoremstyle{definition}
\newtheorem{example}[theorem]{Example}
\numberwithin{equation}{section}
\def\be{\begin{equation}}
\def\ee{\end{equation}}
\def\ba{\begin{aligned}}
\def\ea{\end{aligned}}
\def\bes{\begin{equation*}}
\def\ees{\end{equation*}}
\def\bc{\begin{cases}}
\def\ec{\end{cases}}
\numberwithin{equation}{section}
\author{Ngartelbaye Guerngar\\
	University of North Alabama\\
	\and Erkan Nane\\ Auburn University\\ \and Ramazan Tinatztepe\\ Imam Abdurahman Bin Faisal University  \\ \and Suleyman Ulusoy\footnote{The research of S.U. has been partially supported by BAGEP 2015 award.}\\American University of Ras Al Khaimah\\ \and Hans Werner Van Wyk\\ Auburn University	
}
\title{Simultaneous inversion for the fractional exponents in the
	space-time fractional diffusion equation $\partial_t^\beta u= -\big(-\Delta\big)^{\alpha/2}u-\big(-\Delta\big)^{\gamma/2}u$
	\date{}
}
\begin{document}
\maketitle






\begin{abstract}

\noindent In this article, we consider the space-time fractional (nonlocal)  equation characterizing the so-called "double-scale" anomalous diffusion
$$\partial_t^\beta u(t, x) = -(-\Delta)^{\alpha/2}u(t,x) - (-\Delta)^{\gamma/2}u(t,x) \ \ t>0, \ -1<x<1, $$
where $\partial_t^\beta$ is the Caputo fractional derivative of order $\beta \in (0,1)$ and  $0<\alpha\leq \gamma<2.$ We consider
a nonlocal inverse problem and show that the fractional exponents $\beta$, $\alpha$ and $\gamma$ are determined uniquely by the data $u(t, 0) = g(t), \ 0 < t \leq  T.$ The existence of the solution for the inverse problem is proved  using the quasi-solution method which is based
on minimizing an error functional between the output data and the additional
data. In this context, an input-output mapping is defined and its continuity 
 is established. The uniqueness of the solution for the inverse problem is  proved by means of  eigenfunction expansion of the solution of the forward problem and some basic properties of fractional Laplacian. A numerical method based on discretization of the minimization problem, namely the steepest descent method and a least squares approach,  is proposed for the solution of the inverse problem. The numerical method determines the fractional exponents simultaneously. Finally, numerical examples with noise-free and noisy data illustrate applicability and
high accuracy of the proposed method.
\end{abstract}

\newpage

\section{Introduction}
In this article, we study an inverse problem associated with the following space-time fractional diffusion equation 
\begin{equation}\label{eq1}
\begin{cases}
  &\partial_t^\beta u(t, x) = -(-\Delta)^{\alpha/2}u(t,x) - (-\Delta)^{\gamma/2}u(t,x), ~0<t<T,~ -1<x<1,\\ 
  & u(t, 1) =  u(t,-1)=0, ~~0 < t < T, \\ 
  & u(0, x) = f(x), ~-1 < x < 1. \\ 
\end{cases}
\end{equation}
Here $T > 0$ is the final time and $f$ is a "nice" function in $L^2(D)$, called the initial function, where $D\subset\mathbb{R}$ is the open unit ball. Here the fractional exponents of the Laplacian satisfy\\ $0<\alpha\leq\gamma<2.$ $\beta \in (0, 1) $ and $\partial_t^\beta$ is the Caputo fractional time-derivative. It is defined as

\begin{equation}\label{CapDer}
\partial_t^\beta q(t,\cdot)= \frac{\partial^\beta q(t,\cdot)}{\partial t^\beta}:= \frac{1}{\Gamma(1-\beta)}\int_0^t \frac{\partial q(s,\cdot)}{\partial s}\frac{ds}{(t-s)^\beta},
\end{equation}
where $\Gamma(.)$ is the Euler's gamma function. For example, $\partial_t^\beta (t^p)= \frac{t^{\beta-p}\Gamma(p+1)}{\Gamma(p+1-\beta)}$ for any $p>0$. This definition of the Caputo fractional derivative is intended to properly handle initial values \cite{Cap,CMN,10}, since its Laplace transform $s^\beta \tilde{q}(s,\cdot)-s^{\beta -1} q(0, \cdot)$ incorporates the initial value in the same way the first derivative does. Here, $\tilde{q}(s,\cdot)=\int_0^\infty e^{-ts} q(t,\cdot)dt$ represents the usual Laplace transform of the function $q$. 
It is also well known that, if $q\in C^1(0,\infty)$ satisfies $\big|q'(t)\big|\leq C t^{\nu -1}$ for some $\nu>0$, then by \eqref{CapDer}, the Caputo derivative of $q$ exists for all $t>0$ and the derivative is continuous in $t>0$ \cite{Kil, 11}. 

\noindent The following class of functions will play an important role in this article.
\begin{definition}\label{MtgLfl}
	The Generalized (two-parameter) Mittag-Leffler function is defined by:
	\begin{equation}\label{mtglfr}
	E_{\beta,\alpha}(z)=\sum_{k=0}^{\infty}\frac{z^k}{\Gamma(\beta k+\alpha)}, \ \ z\in\mathbb{C}, \ \ \ \Re (\alpha)>0, \ \ \ \Re (\beta)>0,
	\end{equation}
	where  $\Re(\cdot)$ is the real part of a complex number.  When $\alpha=1$, this function reduces to $E_{\beta}(\cdot):=E_{\beta,1}(\cdot).$
\end{definition}
\noindent It is well-known that the Caputo derivative has a continuous spectrum \cite{CMN,11}, with eigenfunctions given in terms of the Mittag-Leffler function.  In fact, it is not hard to check that the function $q(t)=E_\beta(-\lambda t^\beta)$ is a solution of the eigenvalue equation
$$
\partial_t^\beta q(t)=-\lambda q(t) \ \ \text{for any} \ \lambda>0.
$$

\noindent For $0<\nu< 2$, $(-\Delta)^{\nu/2}$ denotes the fractional Laplacian. Here, we define it using the spectral decomposition of the Laplacian. Let $\big(\bar{\mu}_k, \psi_k\big)$ be the eigenpair corresponding to the Helmholtz's equation

\begin{equation}\label{Helmh}
\begin{cases}
-\Delta \psi_k= \bar{\mu}_k \psi_k \ \ \text{in} \ D\\
\psi_k=0 \ \ \ \text{on }\ \ \partial D.
\end{cases}
\end{equation} 
A simple calculation shows that $\bar{\mu}_k= \Bigg(\frac{k\pi}{2}\Bigg)^2 $ and 
$
\psi_k= \sin\Big[\frac{k\pi}{2}(x+1)\Big] \ \ \text{for all}\  k\geq 1.
$
  For $0<\alpha\leq \gamma<2$,  define the operator  $\mathtt{L}^{\alpha,\gamma}_D:=-(-\Delta)^{\alpha/2}-(-\Delta)^{\gamma/2}$ on $D$ for 

$$
f\in \text{Dom} \Big(\mathtt{L}^{\alpha,\gamma}_D\Big)= \Big\{ f= \sum_{n=1}^\infty c_n\psi_n\in L^2(D): \sum_{n=1}^\infty c_n^2 \mu_n^2<\infty \Big\}:= \dot{H}^{\alpha,\gamma}
$$
and
\begin{equation}\label{FrLpDef}
 \mathtt{L}^{\alpha,\gamma}_Df(x)=-\sum_{n=1}^\infty c_n\mu_n\psi_n(x) \ \ \text{with }\ \mu_m=\bar{\mu}_m^{\alpha/2}+ \bar{\mu}_m^{\gamma/2} \ \ \text{for all} \ m=1,2,\cdots.
\end{equation}
Note that for all $k\geq 1$, the eigenpair $(\mu_k, \psi_k)$ is such that    $0<\mu_1\leq \mu_2\leq \cdots$ is a sequence of positive numbers and $\Big(\psi_k\Big)_{k\geq 1}$ is an orthonormal basis of $L^2(D).$
Clearly $\dot{H}^{\alpha,\gamma}\subset L^2(D)$. It is a Hilbert space endowed with the  inner product, $\langle\cdot,\cdot\rangle$ represents the usual inner product on $L^2(D)$, \\${\langle u,v\rangle}_{\dot{H}^{\alpha,\gamma}}= \langle \mathtt{L}^{\alpha,\gamma}_D u, \mathtt{L}^{\alpha,\gamma}_D v\rangle$
 and induced norms ${\|v\|}_{\dot{H}^{\alpha,\gamma}}= {\big\|\mathtt{L}^{\alpha,\gamma}_Dv\big\|}_{L^2(D)}= \Bigg[\sum_{n=1}^\infty \mu_n^2{\langle v,\psi_n\rangle}^2\Bigg]^{1/2}$.\\
 For example, $\dot{H}^{0,0}= L^2(D)$,  $\dot{H}^{1,1}= H^1_0(D)$ and $\dot{H}^{2,2}= H^2(D)\cap H^1_0(D)$ with equivalent norms and $\dot{H}^{-\alpha,-\gamma}$ can be identified with the dual space $\Big(\dot{H}^{\alpha,\gamma}\Big)^*$ for $\alpha, \gamma>0. $ Let ${\langle f, \phi \rangle}_{*}$ denote the value of $f$ operating on the bounded linear functional $\phi\in \dot{H}^{\alpha,\gamma}$. It turns out that $\dot{H}^{-\alpha,-\gamma}$ is also a Hilbert space with the norm ${\|\phi\|}_{\dot{H}^{-\alpha,-\gamma}}= \Bigg[\sum_{n=1}^\infty \mu_n^{-2}|{\langle f, \psi_n \rangle}_{*}|^2\Bigg]^{1/2}$. Moreover, ${\langle f, \phi \rangle}_{*}=\langle f, \phi\rangle$ if $f\in L^2(D)$ and $\phi\in \dot{H}^{\alpha,\gamma} $, see for example \cite[Chap. V]{Brez}.

The main purpose of this article is to determine simultaneously the fractional  exponents $\beta, \alpha$ and $\gamma$ in \eqref{eq1} by means of the observation data $u(t, 0) =
g(t), \ 0 < t \leq  T .$  By this result, one can expect that by means of experiments the important
parameters $\beta, \alpha$ and $\gamma$ characterizing the "double-scale" anomalous diffusion \eqref{eq1} can be identified simultaneously.

\noindent In fact, it is well-known that the traditional diffusion equation $\partial_t u= \Delta u$ describes a cloud of spreading particles at the macroscopic level and  the  space-time fractional diffusion equation $\partial_t^\beta u= -(-\Delta)^{\alpha/2}u$ with $0<\beta<1$ and $0<\alpha<2$ models anomalous diffusions \cite{CMN,MBSB,GuNaSoy}. Here, the classical Laplacian $(-\Delta)$ is the generator of a Brownian motion and  the fractional Laplacian $(-\Delta)^{\alpha/2}$ is the infinitesimal  generator of a symmetric $\alpha-$ stable process \\ $X= \Big\{  X_t, \ t\geq 0, \mathbb{P}_x, \ x\in \mathbb{R}^d \Big\}$, a typical example of a non-local operator. This process is  a L\'evy process satisfying

$$
\mathbb{E}\Big[ e^{i\xi(X_t-X_0)}\Big]= e^{-t|\xi|^\alpha} \ \ \ \text{for every} \ x, \xi\in \mathbb{R}^d.
$$
 Now, suppose $X$ is a Brownian motion and let $X^D$ denote the "killed" process, i.e
 
 \begin{equation}
 X^D_t:= \begin{cases}
 &X_t , \ \ \ \ \  t<\tau_D \\
 &\partial,  \ \ \ \ t\geq \tau_D.
 \end{cases}
 \end{equation}
 Here, 
 \begin{equation}\label{tau}
 \tau_D:=\inf\{ t\geq 0: X_t\in \partial D\}
 \end{equation}
 is the first existing time and  $\partial$ is a cemetery  point added to $D$. Throughout this paper, we use the convention that any real-valued function $f$ can be extended by taking $f(\partial)=0.$
  Then   $\Big[(-\Delta)^{\alpha/2}+ (-\Delta)^{\gamma/2}\Big]\Bigg|_D$ is the infinitesimal generator of the process $X^D(E_t)$, where $E_t$ is the inverse stable subordinator with Laplace exponent $\phi(s)= s^{\alpha/2}+s^{\gamma/2}$.
The L\'evy process $X^D(E_t)$ runs on two different scales: on the small spatial scale, the $\gamma$ component dominates, while on the large spatial scale the $\alpha$ component takes over \cite{ChKmSg}.
 
 There have been  recently  many works in inverse problems with fractional derivatives. However,  most of the problems considered involve only a fractional time derivative and the  determination of that fractional exponent under some additional condition(s) is the inverse problem. In fact, these problems are physically and practically very important \cite{CNYY, JR, Mis2, Mis1, Mis5, Mis6,  Mis3, Mis4, ZX}. The current study extends the work of \cite{TrPeSy,TarSoy} in which fractional exponents were considered both in the time and space variable. It is good to note that this is a very recent approach in the inverse problems community, see \cite{LiZhg} and some of the references cited therein. This study can be regarded as a continuation of  \cite{GuNaSoy} using a spectral eigenfunction expansion of the weak solution to the initial/boundary value problem \eqref{eq1}.

The rest of this  article is organized as follows: in the next section we provide a quick analysis  of the direct problem and introduce the inverse problem. Section \ref{Sect3} is devoted to both the statement and the proof of our main results. Section \ref{Sect4} provides some details on the algorithm used to obtain the solution to our problem and Section \ref{Sect5} concludes this article with some numerical examples to  illustrate the applicability and the high accuracy of the  method used. Throughout this article, the letter $c$, in upper or lower case, with or without a subscript, denotes a constant whose value is not of interest in this article and may stay the same or change from line to line.

\section{The direct and inverse problem}
In this section, we provide a quick analysis of the direct problem and introduce the inverse problem. We begin with a definition:
\begin{definition}\label{WeakSol}
We call $u$ a weak solution to \eqref{eq1} if the following conditions are satisfied:
\begin{equation}
\begin{split}
&u(t,.) \in \dot{H}^{\alpha,\gamma}\ \ \text{for each} \ t>0,\\
& u(t, \cdot)= 0 \ \text{on } \ \partial D \ \text{for each } \ 0<t\leq T\\
&\lim\limits_{t\downarrow 0} u(t,x)= f(x) \ \ a.e, \\
& \partial_t^\beta u(t,x)= \mathtt{L}_D^{\alpha,\gamma} u(t,x) \ \ \text{in} \ L^2(D)
\end{split}
\end{equation}
\end{definition}
 It is well-know that the direct problem \eqref{eq1} has a unique weak solution given by the eigenfunction expansion \cite{Mis6, CMN, Luc, GuNaSoy}
\begin{equation}\label{WkSol}
u(t,x)=\sum_{n=1}^{\infty} E_\beta(-\mu_nt^\beta)\langle f, \psi_n\rangle \psi_n(x).
\end{equation}
 
We will show later in this article that the series in \eqref{WkSol} exists, is unique and is uniformly convergent in $C\Big( (0,T];\dot{H}^{\alpha,\gamma}\Big) .$ 

\noindent Set $u(t):= u(t,\cdot)$. In the existence and uniqueness theorem, we will need the solution of our problem in the following form: 

\begin{equation}\label{Eq1ODE}
\begin{cases}
&\partial_t^\beta u(t)= \Big[-(-\Delta)^{\alpha/2}-(-\Delta)^{\gamma/2}\Big]u(t)+h(t), \ \ \ t>0\\
& u(0)=g.
\end{cases}
\end{equation}
To this aim, we define

$$
U(t)g= \sum_{n=1}^{\infty} E_\beta(-\mu_nt^\beta)\langle g, \psi_n\rangle \psi_n(x), \ \ \ t\geq 0
$$

and 

\begin{equation}\label{Vg}
V(t)g= t^{\beta -1}\sum_{n=1}^{\infty} E_{\beta, \beta}(-\mu_nt^\beta)\langle g, \psi_n\rangle \psi_n(x), \ \ \ t\geq 0.
\end{equation}

Then a solution of \eqref{Eq1ODE} is given by

\begin{equation}\label{SolEq1ODE}
u(t)= U(t)g+\int_0^t V(t-s)h(s)ds, \ \ \ t>0.
\end{equation}

\noindent The following lemmas indicate important properties of  Mittag-Leffler functions. They  will be used frequently in the sequel.
\begin{lemma}\label{Mtg1}
If $0<\beta<2$, $\mu$ is such that  $\pi\beta/2<\mu<\min(\pi, \pi\beta)$ and $\mu\leq |\arg(z)|\leq \pi$, then the following expansion holds
	\begin{equation}\label{Mitg1}
	\big|E_{\beta}(-z)\big|=\frac{1}{z\Gamma(1-\beta)}+ O(|z|^{-2}) .
	\end{equation}
\end{lemma}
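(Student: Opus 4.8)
The plan is to read off \eqref{Mitg1} from the classical large-argument asymptotics of the Mittag-Leffler function; since only the first two terms are needed I will run the argument directly rather than quote the general theorem. Write $w=-z$, so that in the regime of \eqref{Mitg1} the genuine argument of $E_\beta$ satisfies $\mu\le|\arg w|\le\pi$. First I would record the Hankel-type representation
\[
E_\beta(w)=\frac{1}{2\pi i\,\beta}\int_{\gamma(\epsilon;\delta)}\frac{\exp\!\big(\zeta^{1/\beta}\big)}{\zeta-w}\,d\zeta ,
\]
where $\gamma(\epsilon;\delta)$ is the loop made of the two rays $\arg\zeta=\pm\delta$, $|\zeta|\ge\epsilon$, joined by the arc $|\zeta|=\epsilon$, $|\arg\zeta|\le\delta$, positively oriented, and $\delta$ is any angle with $\pi\beta/2<\delta<\mu$ --- such a $\delta$ exists precisely because $\mu>\pi\beta/2$. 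This formula comes from the series \eqref{mtglfr} by writing each $1/\Gamma(\beta k+1)$ as a Hankel integral and summing the geometric series; it holds for every $w$ outside the closed sector $\{|\arg\zeta|\le\delta\}$, hence for our $w$ since $|\arg w|\ge\mu>\delta$.

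Next I would insert the exact identity $\dfrac{1}{\zeta-w}=-\dfrac1w-\dfrac{\zeta}{w(\zeta-w)}$ into the integral. The $-1/w$ piece contributes $-\dfrac{1}{2\pi i\,\beta\,w}\int_{\gamma(\epsilon;\delta)}\exp(\zeta^{1/\beta})\,d\zeta$; the substitution $s=\zeta^{1/\beta}$ turns $\gamma(\epsilon;\delta)$ into a Hankel path, and Hankel's representation of the reciprocal Gamma function gives $\dfrac{1}{2\pi i\,\beta}\int_{\gamma(\epsilon;\delta)}\exp(\zeta^{1/\beta})\,d\zeta=\dfrac{1}{\Gamma(1-\beta)}$, so this piece equals $-\dfrac{1}{w\,\Gamma(1-\beta)}$. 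What then remains is to show that
\[
R(w):=-\frac{1}{2\pi i\,\beta\,w}\int_{\gamma(\epsilon;\delta)}\frac{\zeta\exp(\zeta^{1/\beta})}{\zeta-w}\,d\zeta=O\big(|w|^{-2}\big).
\]

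Two observations give this. On the rays $\arg\zeta=\pm\delta$ one has $\Re(\zeta^{1/\beta})=|\zeta|^{1/\beta}\cos(\delta/\beta)$, and $\delta>\pi\beta/2$ forces $\delta/\beta>\pi/2$, so $|\exp(\zeta^{1/\beta})|\le\exp(-c|\zeta|^{1/\beta})$ with $c>0$; hence $\int_{\gamma(\epsilon;\delta)}|\zeta|\,|\exp(\zeta^{1/\beta})|\,|d\zeta|$ is a finite constant depending only on $\beta$ and $\delta$. Moreover, since $\arg\zeta\in[-\delta,\delta]$ while $|\arg w|\ge\mu>\delta$, the angle between $\zeta$ and $w$ is at least $\mu-\delta$, so by the law of cosines $|\zeta-w|^2\ge|\zeta|^2+|w|^2-2|\zeta||w|\cos(\mu-\delta)\ge|w|^2\sin^2(\mu-\delta)$ uniformly on $\gamma(\epsilon;\delta)$. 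Combining, $|R(w)|\le C_{\beta,\delta}\big(\sin(\mu-\delta)\big)^{-1}|w|^{-2}$. Finally, putting $w=-z$ and using $(-z)^{-1}=-z^{-1}$ converts $E_\beta(w)=-\dfrac{1}{w\,\Gamma(1-\beta)}+R(w)$ into $E_\beta(-z)=\dfrac{1}{z\,\Gamma(1-\beta)}+O(|z|^{-2})$, which is \eqref{Mitg1}.

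The only delicate point is the uniform lower bound $|\zeta-w|\ge\sin(\mu-\delta)\,|w|$ along the unbounded contour: this is exactly where the hypothesis $\pi\beta/2<\mu$ is used, since we must find a single angle $\delta$ that is simultaneously large enough ($>\pi\beta/2$) to make $\exp(\zeta^{1/\beta})$ integrable on the rays and small enough ($<\mu$) to stay angularly separated from $w$. Running the same computation with the $p$-term identity $\frac{1}{\zeta-w}=-\sum_{k=1}^{p}\frac{\zeta^{k-1}}{w^{k}}-\frac{\zeta^{p}}{w^{p}(\zeta-w)}$ would yield the full expansion $E_\beta(-z)=-\sum_{k=1}^{p}\frac{(-z)^{-k}}{\Gamma(1-\beta k)}+O(|z|^{-p-1})$, but only the case $p=1$ recorded in \eqref{Mitg1} is needed in what follows.
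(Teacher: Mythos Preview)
The paper does not prove this lemma at all; it is stated as a standard property of the Mittag-Leffler function, in the spirit of the references \cite{11,Kil}. Your proposal therefore does something the paper never attempts: it supplies a complete, self-contained derivation via the Hankel contour representation, which is indeed the classical route to these asymptotics. The argument is sound. Two small remarks: (i) the algebraic identity should read $\dfrac{1}{\zeta-w}=-\dfrac1w+\dfrac{\zeta}{w(\zeta-w)}$ with a plus sign on the remainder; your minus is a slip, but since only $|R(w)|=O(|w|^{-2})$ is needed it is harmless. (ii) You silently reinterpret the hypothesis as $\mu\le|\arg(-z)|\le\pi$ rather than $\mu\le|\arg z|\le\pi$; this is the reading under which the lemma is both true and applicable in the paper (where $z=\mu_n t^\beta>0$, so $\arg(-z)=\pi$), and the paper's statement is simply imprecise on this point. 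What your approach buys over a bare citation is that the role of the hypothesis $\pi\beta/2<\mu$ is made fully transparent: it is exactly what allows a single opening angle $\delta$ that simultaneously makes $\exp(\zeta^{1/\beta})$ integrable along the rays and keeps the contour angularly separated from $w$.
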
 
 
\begin{lemma}\label{Mtg2}
	For each $0<\beta<2$, $\nu$  a complex number such that $\Re(\nu)>0$, \\  $\pi\beta/2<\mu<\min(\pi, \pi\beta)$ and $\mu\leq |\arg(z)|\leq \pi$, there exists a constant $C_0>0$ such that
	\begin{equation}\label{Mitg2}
	\big|E_{\beta, \nu}(z)\big|\leq \frac{C_0}{1+|z|}, \ \ .
	\end{equation}
\end{lemma}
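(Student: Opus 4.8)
The plan is to combine the classical large-$|z|$ asymptotics of the generalized Mittag-Leffler function on a sector avoiding the positive real axis with a trivial bound on a bounded disk, and then to glue the two estimates. In the same spirit as Lemma \ref{Mtg1}, one has for every integer $N\geq 1$ the asymptotic expansion
\begin{equation}
E_{\beta,\nu}(z)=-\sum_{k=1}^{N}\frac{z^{-k}}{\Gamma(\nu-\beta k)}+O\big(|z|^{-N-1}\big),\qquad |z|\to\infty,\ \ \mu\leq|\arg(z)|\leq\pi,
\end{equation}
valid under the stated hypotheses $0<\beta<2$, $\Re(\nu)>0$ and $\pi\beta/2<\mu<\min(\pi,\pi\beta)$ (see e.g.\ \cite{Kil, 11} and the references therein). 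Taking $N=1$ produces a radius $R\geq 1$ and a constant $C_1>0$ such that
\begin{equation}
\big|E_{\beta,\nu}(z)\big|\leq \frac{C_1}{|z|}\qquad\text{whenever }|z|\geq R,\ \ \mu\leq|\arg(z)|\leq\pi .
\end{equation}

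For the complementary region I would use that $E_{\beta,\nu}$ is entire: the series \eqref{mtglfr} has infinite radius of convergence since $\Gamma(\beta k+\nu)$ grows faster than any geometric sequence when $\beta>0$. Hence $E_{\beta,\nu}$ is continuous, and therefore bounded, say by some $M>0$, on the compact disk $\{|z|\leq R\}$. The two estimates then glue: for $|z|\geq R\geq 1$ we have $1+|z|\leq 2|z|$, so $\big|E_{\beta,\nu}(z)\big|\leq C_1/|z|\leq 2C_1/(1+|z|)$; for $|z|\leq R$ we have $1+|z|\leq 1+R$, so $\big|E_{\beta,\nu}(z)\big|\leq M\leq M(1+R)/(1+|z|)$. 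Choosing $C_0:=\max\{2C_1,\,M(1+R)\}$ yields the claimed bound on the entire sector.

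The one nontrivial ingredient, and the step I expect to be the main obstacle, is the asymptotic expansion itself. The standard derivation starts from the Hankel-type contour integral representation
\begin{equation}
E_{\beta,\nu}(z)=\frac{1}{2\pi i}\int_{\mathcal{C}}\frac{t^{\beta-\nu}\,e^{t}}{t^{\beta}-z}\,dt ,
\end{equation}
where $\mathcal{C}$ runs from $-\infty$ around the origin and back to $-\infty$, suitably deformed according to $\arg z$. Splitting $\frac{1}{t^{\beta}-z}=-\sum_{k=1}^{N}\frac{t^{\beta(k-1)}}{z^{k}}-\frac{t^{\beta N}}{z^{N}(t^{\beta}-z)}$, the first $N$ terms are evaluated by Hankel's integral formula for $1/\Gamma$ and contribute exactly $-\sum_{k=1}^{N}z^{-k}/\Gamma(\nu-\beta k)$, while the remainder is $O(|z|^{-N-1})$ uniformly on $\mu\leq|\arg(z)|\leq\pi$ because on the deformed contour $|t^{\beta}-z|$ stays bounded below by a fixed positive multiple of $|z|$; this lower bound is precisely what the constraints $\pi\beta/2<\mu$ and $\mu<\min(\pi,\pi\beta)$ are designed to guarantee. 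Since only the case $N=1$ is needed here, it suffices to invoke this expansion rather than reproduce the contour estimates in full.
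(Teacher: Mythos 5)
Your proof is correct, and it is exactly the classical argument: the paper itself does not prove this lemma at all, but states it as a known property of the Mittag--Leffler function (it is Theorem 1.6 in Podlubny \cite{11}, whose proof is precisely your combination of the large-$|z|$ asymptotic expansion on the sector $\mu\leq|\arg(z)|\leq\pi$ with boundedness of the entire function $E_{\beta,\nu}$ on the compact disk $|z|\leq R$, glued via $1+|z|\leq 2|z|$ and $1+|z|\leq 1+R$). The only blemish is a sign in your remainder decomposition, which should read $\frac{1}{t^{\beta}-z}=-\sum_{k=1}^{N}\frac{t^{\beta(k-1)}}{z^{k}}+\frac{t^{\beta N}}{z^{N}(t^{\beta}-z)}$; this is immaterial since only the modulus of the remainder is used.
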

\begin{lemma}\label{Mtg3}
If $0\leq \beta\leq 1$, then $E_\beta(-z)$ is completely monotone on $(0,\infty)$ and all the derivatives of $E_\beta(-z)$ are bounded on $(0,\infty)$.
\end{lemma}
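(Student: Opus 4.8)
The plan is to prove complete monotonicity first and then read off the boundedness of all derivatives as a short corollary. For complete monotonicity I would exhibit $E_\beta(-z)$ as the Laplace transform of a \emph{nonnegative} density. Starting from the series \eqref{mtglfr} with $\alpha=1$ together with the Hankel-contour representation $\frac{1}{\Gamma(\zeta)}=\frac{1}{2\pi i}\int_{\mathrm{Ha}}e^{s}s^{-\zeta}\,ds$, one first writes $E_\beta(-z)=\frac{1}{2\pi i}\int_{\mathrm{Ha}}\frac{e^{s}s^{\beta-1}}{s^\beta+z}\,ds$ and then collapses the contour onto the branch cut along the negative real axis; this is legitimate for $z>0$ and $0<\beta<1$ because $s^\beta+z$ has no zeros there and the contributions of the small and large circular arcs vanish. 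The outcome is, for $z>0$,
\begin{equation}
E_\beta(-z)=\int_0^\infty e^{-zr}\,\rho_\beta(r)\,dr,\qquad
\rho_\beta(r)=\frac{\sin(\pi\beta)}{\pi}\,\frac{r^{\beta-1}}{r^{2\beta}+2r^{\beta}\cos(\pi\beta)+1},
\end{equation}
and $\rho_\beta\ge 0$ on $(0,\infty)$ since $\sin(\pi\beta)>0$ and the denominator equals $(r^\beta+\cos\pi\beta)^2+\sin^2\pi\beta>0$. (The endpoint cases are trivial: $E_0(-z)=(1+z)^{-1}$ and $E_1(-z)=e^{-z}$ are manifestly completely monotone; alternatively one may use the probabilistic identity $E_\beta(-z)=\mathbb{E}\big[e^{-zE_1}\big]$ with $E_1$ the value at time one of the inverse $\beta$-stable subordinator, which is a nonnegative random variable.) Differentiating under the integral sign then gives $(-1)^n\frac{d^n}{dz^n}E_\beta(-z)=\int_0^\infty r^n e^{-zr}\rho_\beta(r)\,dr\ge 0$ for every $n\ge 0$ and $z>0$, which is precisely complete monotonicity.

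For the boundedness of the derivatives I would avoid the integrals above (they are delicate as $z\downarrow 0$) and instead combine complete monotonicity with the Taylor expansion at the origin. Because $E_\beta(-z)=\sum_{k\ge 0}\frac{(-z)^k}{\Gamma(\beta k+1)}$ is entire, term-by-term differentiation yields $\lim_{z\downarrow 0}(-1)^n\frac{d^n}{dz^n}E_\beta(-z)=\frac{n!}{\Gamma(\beta n+1)}$. On the other hand, if $f$ is completely monotone then so is $(-1)^n f^{(n)}$ for each $n$, and in particular $(-1)^n f^{(n)}$ is nonincreasing on $(0,\infty)$; applying this to $f(z)=E_\beta(-z)$ gives
\begin{equation}
0\le (-1)^n\frac{d^n}{dz^n}E_\beta(-z)\le \frac{n!}{\Gamma(\beta n+1)}\qquad\text{for all }z>0,
\end{equation}
so each derivative is bounded on $(0,\infty)$ by a constant depending only on $n$ and $\beta$.

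The main obstacle is the first step: the nonnegative-kernel Laplace representation. One must justify the contour deformation carefully (convergence on the arcs, absence of poles of $(s^\beta+z)^{-1}$ on the deformed path, dominated convergence), and one must check that the resulting integral indeed reproduces the series \eqref{mtglfr} (for instance by verifying the value at $z=0$, or by matching moments). This is exactly the classical theorem of Pollard on the completely monotone character of $E_\beta(-x)$, so in the write-up I would either carry out this computation in an appendix or simply cite it; everything downstream—the complete monotonicity claim and the uniform bounds on the derivatives—is then routine.
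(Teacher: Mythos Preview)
The paper states this lemma without proof, treating it as a known background fact about Mittag-Leffler functions (the complete monotonicity assertion is the classical theorem of Pollard, 1948). There is therefore no proof in the paper to compare against.

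Your proposal is correct and is essentially Pollard's original argument for the first part: the Hankel-contour integral for $E_\beta(-z)$ collapsed onto the negative real axis gives exactly the Laplace representation with the nonnegative kernel $\rho_\beta$ you wrote down, and Bernstein's theorem then yields complete monotonicity. Your treatment of the endpoints $\beta=0,1$ is fine. The second part---bounding each derivative---is handled by a neat observation that is slightly sharper than what the paper actually needs: since $(-1)^n\frac{d^n}{dz^n}E_\beta(-z)$ is itself completely monotone, it is nonnegative and nonincreasing, hence dominated by its limit $n!/\Gamma(\beta n+1)$ as $z\downarrow 0$, which you read off from the power series. This is correct and gives an explicit constant. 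The only point requiring real care, as you note, is justifying the contour deformation (no zeros of $s^\beta+z$ in the cut plane for $z>0$, vanishing of the arc contributions); in a write-up one would either carry this out or cite Pollard directly, as you suggest.
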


\noindent The following theorem gives the regularity of the solution of the direct problem.
\begin{theorem}
Let $f\in \dot{H}^{\alpha,\gamma} \subset L^2(D)$. Then there exists a unique weak solution $u$ of \eqref{eq1} such that $u\in C\Big([0,T]; L^2(D\Big)\cap C\Big((0,T]; \dot{H}^{\alpha,\gamma}\Big)$. Moreover, there exists a positive constant $C$ such that $\partial_t^\beta u\in C\Big((0,T]; L^2(D)\Big)$ and 
\begin{equation}\label{RegSol}
{\|u(t,\cdot)\|}_{\dot{H}^{\alpha,\gamma}}+ {\|\partial_t^\beta u(t,\cdot)\|}_{L^2(D)}\leq C t^{-\beta} {\|f\|}_{\dot{H}^{\alpha,\gamma}}.
\end{equation}
\end{theorem}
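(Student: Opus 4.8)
The plan is to verify that the eigenfunction series \eqref{WkSol} is the asserted weak solution, to read off its regularity and the estimate \eqref{RegSol} from pointwise bounds on the Mittag-Leffler coefficients, and to obtain uniqueness by projecting an arbitrary weak solution onto the orthonormal basis $\{\psi_n\}_{n\ge1}$.

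All of the analysis reduces to controlling the scalar factors $q_n(t):=E_\beta(-\mu_n t^\beta)$ and their derivatives. By Lemma \ref{Mtg3}, $0\le q_n(t)\le E_\beta(0)=1$ for $t\ge0$. Since the admissible-sector hypothesis of Lemma \ref{Mtg2} is satisfied on the negative real axis whenever $0<\beta<1$, applying it with $\nu=1$ yields $|q_n(t)|\le C_0/(1+\mu_n t^\beta)$, and hence $\mu_n q_n(t)\le C_0 t^{-\beta}$ for every $n\ge1$ and $t>0$. Differentiating $q_n$ term by term gives $q_n'(t)=-\mu_n t^{\beta-1}E_{\beta,\beta}(-\mu_n t^\beta)$, so Lemma \ref{Mtg2} with $\nu=\beta$ gives $|q_n'(t)|\le C\mu_n t^{\beta-1}$. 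Finally, each $q_n$ satisfies the eigenvalue identity $\partial_t^\beta q_n(t)=-\mu_n q_n(t)$ recalled in the Introduction (with $\lambda=\mu_n$).

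The remaining steps are Parseval bookkeeping. Writing $c_n:=\langle f,\psi_n\rangle$ and using orthonormality of $\{\psi_n\}$, $\|u(t)\|_{\dot{H}^{\alpha,\gamma}}^2=\sum_{n\ge1}\mu_n^2 q_n(t)^2 c_n^2\le C_0^2 t^{-2\beta}\sum_{n\ge1}c_n^2=C_0^2 t^{-2\beta}\|f\|_{L^2(D)}^2$, and since the weak-solution identity forces $\partial_t^\beta u(t)=\mathtt{L}_D^{\alpha,\gamma}u(t)=-\sum_{n\ge1}\mu_n q_n(t)c_n\psi_n$, the same bound gives $\|\partial_t^\beta u(t)\|_{L^2(D)}^2\le C_0^2 t^{-2\beta}\|f\|_{L^2(D)}^2$; as $\mu_n\ge\mu_1>0$ we have $\|f\|_{L^2(D)}\le\mu_1^{-1}\|f\|_{\dot{H}^{\alpha,\gamma}}$, so \eqref{RegSol} follows. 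For the continuity assertions I would show that the series for $u$, for $\mathtt{L}_D^{\alpha,\gamma}u$, and hence for $\partial_t^\beta u$, converge uniformly on compact subsets of the relevant time intervals: every summand is continuous in $t$ into the target space, the $L^2(D)$-tail $\sum_{n\ge N}q_n(t)^2 c_n^2\le\sum_{n\ge N}c_n^2$ is uniformly small on all of $[0,T]$ — giving $u\in C([0,T];L^2(D))$ with $u(0)=f$ by dominated convergence for series since $q_n(t)\to1$ — whereas for $t$ in a compact subset of $(0,T]$ the tails $\sum_{n\ge N}\mu_n^2 q_n(t)^2 c_n^2$ are dominated by $C t_0^{-2\beta}\sum_{n\ge N}c_n^2$, again uniformly small, which gives $u\in C((0,T];\dot{H}^{\alpha,\gamma})$ and likewise $\partial_t^\beta u\in C((0,T];L^2(D))$.

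Uniqueness is standard: if $u$ is a weak solution with $f=0$, pairing $\partial_t^\beta u=\mathtt{L}_D^{\alpha,\gamma}u$ (an identity in $L^2(D)$) with $\psi_n$ and setting $u_n(t):=\langle u(t,\cdot),\psi_n\rangle$ yields $\partial_t^\beta u_n(t)=-\mu_n u_n(t)$ with $u_n(0)=0$, whose only solution is $u_n\equiv0$ (by the Laplace-transform characterization of $\partial_t^\beta$ recalled after \eqref{CapDer}, equivalently by uniqueness for the associated linear Volterra equation), and completeness of $\{\psi_n\}$ forces $u\equiv0$. The one genuinely delicate point, which I expect to be the main obstacle, is the rigorous interchange of $\partial_t^\beta$ with the infinite sum in \eqref{WkSol} for fixed $t>0$, namely that $\partial_t^\beta u(t)=-\sum_{n\ge1}\mu_n q_n(t)c_n\psi_n$ really holds in $L^2(D)$: this is secured by the derivative bound $|q_n'(s)|\le C\mu_n s^{\beta-1}$, which gives $\|\partial_s u(s)\|_{L^2(D)}\le C s^{\beta-1}\|f\|_{\dot{H}^{\alpha,\gamma}}$ and hence $\int_0^t(t-s)^{-\beta}\|\partial_s u(s)\|_{L^2(D)}\,ds<\infty$, so that the Caputo integral is an absolutely convergent Bochner integral in $L^2(D)$ and termwise evaluation is justified; everything else is routine.
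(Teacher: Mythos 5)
Your proposal is correct and follows essentially the same route as the paper: the bound $\mu_n|E_\beta(-\mu_n t^\beta)|\le C_0 t^{-\beta}$ from Lemma \ref{Mtg2} combined with Parseval for \eqref{RegSol}, uniform convergence of the tails for the continuity claims, and projection onto $\{\psi_n\}$ plus uniqueness for the scalar fractional ODE for uniqueness of the weak solution. You are in fact somewhat more careful than the paper on two points it leaves implicit — the termwise application of $\partial_t^\beta$ to the series via the Bochner-integrability of $(t-s)^{-\beta}\partial_s u(s)$, and the explicit tail estimates justifying uniform convergence — but these are refinements of, not departures from, the paper's argument.
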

\begin{proof}
The series in \eqref{WkSol} is certainly a weak solution to \eqref{eq1}. The existence of this series  is proved in \cite[(2.3)]{GuNaSoy}.\\ For the uniqueness  of a weak solution to \eqref{eq1}, it is enough to show that a function $u$ in Definition \ref{WeakSol} solving \eqref{eq1} with $f=0$ must be $u\equiv 0$ . We follow a similar argument  from \cite{Mis6} to this aim. Since $\psi_n$ are the eigenfunctions of the following eigenvalue problem:
\begin{equation}\label{EigenVPrb}
	\begin{cases}
	&\mathtt{L}^{\alpha,\gamma}_D \psi_n=-\mu_n\psi_n \ \ \text{in} \ \ D\\
	 &\psi_n=0 \ \qquad\qquad \text{on} \ \partial D,
	\end{cases}
\end{equation}
in terms of the regularity of $u$, taking the duality pairing ${\langle .,. \rangle}_{*}$ of the first equation in \eqref{eq1} with $\psi_n$ and setting $u_n(t):={\langle u(t, \cdot), \psi_n \rangle}_{*}$, we obtain 
\begin{equation}\label{PairEq}
\partial_t^\beta u_n(t)=-\mu_n u_n(t)  \ \ \text{for almost every} \ t\in (0, T].
\end{equation}
Since $u(t, \cdot)\in L^2(D)$ for almost every $t\in (0, T]$ and $u_n(t)={\langle u(t, \cdot), \psi_n \rangle}_{*}= {\langle u(t, \cdot), \psi_n \rangle} $. It follows from $\lim\limits_{t\downarrow 0}{\|u(t,\cdot)\|}_{\dot{H}^{-\alpha,-\gamma}}=0$ that $u_n(0)=0$. Thus, due to the existence and uniqueness of the solution to the ordinary fractional differential equation \eqref{PairEq}, see for example \cite[Chap 3]{ 11} , it must be the case that $u_n(t)\equiv0$ for $n=1,2, \cdots$. Finally, since $\{ \psi_n\}_{n\in\mathbb{N}}$ is a complete orthonormal system of $L^2(D),$ we have $u\equiv0$ in $(0,T]\times D.$

 We now provide a proof for the estimate \eqref{RegSol}. Note that 
\begin{equation}
{\big\|u(t,\cdot)\big\|}_{L^2(D)}^2= \sum_{n=1}^{\infty} \Big|E_\beta(-\mu_nt^\beta)\langle f, \psi_n\rangle \Big|^2\leq \sum_{n=1}^{\infty} c_1\langle f, \psi_n\rangle^2\leq c_2{\big\|f\big\|}_{L^2(D)}^2\leq C_2{\big\|f\big\|}_{\dot{H}^{\alpha,\gamma}}^2.
\end{equation}
Moreover, by Lemma \ref{Mtg2},
\begin{equation}\label{LplInq}
{\big\|\partial_t^\beta u(t,\cdot)\big\|}^2_{L^2(D)}= \sum_{n=1}^\infty \Big|\mu_n E_\beta(-\mu_nt^\beta)\langle f, \psi_n\rangle \Big|^2\leq c_3 t^{-2\beta}{\big\|f\big\|}_{L^2(D)}^2\leq C_3 t^{-2\beta}{\big\|f\big\|}_{\dot{H}^{\alpha,\gamma}}^2. 
\end{equation}
In particular, \eqref{LplInq} implies that 
\begin{equation}\label{IndLplInq}
{\big\| (-\Delta)^{\xi/2} u\big\|_{L^2(D)}}\leq  c_4 t^{-\beta}{\big\|f\big\|}_{L^2(D)}, \ \ \   0<\xi\leq \gamma.
\end{equation}
Next, since the series in \eqref{WkSol} converges uniformly in $t\in [0,T],$ we see that $u\in  C\Big((0,T]; L^2(D)\Big).$ Moreover, in \eqref{LplInq}, since the series $-\sum_{n=1}^\infty \mu_n E_\beta(-\mu_nt^\beta)\langle f, \psi_n\rangle \psi_n$ is uniformly convergent for $t\in [\delta,T]$ with any given $\delta>0,$ this implies  that $\mathtt{L}^{\alpha,\gamma}_D u\in C\Big((0,T]; L^2(D)\Big),$ i.e \\ $u\in C\Big((0,T]; \dot{H}^{\alpha,\gamma}\Big).$ Whence
$u\in  C\Big([0,T]; L^2(D)\Big)\cap C\Big((0,T]; \dot{H}^{\alpha,\gamma}\Big) $ and \eqref{LplInq} holds.
\end{proof}
Next we define the inverse problem. As it is known, a direct problem aims to find a
solution that satisfies a  given differential equation (ordinary, partial or fractional) and related initial and boundary conditions. In some problems, the main equation and the conditions are not sufficient to obtain a solution, but instead some additional conditions (also called measured output data) are required. Such problems are called the corresponding inverse  problems. In general, the additional conditions may be given on the domain's boundary, on the final time or on the whole domain (also known as nonlocal condition). In this paper, we use the following additional condition
\begin{equation}\label{AddData}
u(t,0)= g(t), \ \ \ 0<t\leq T.
\end{equation}
The inverse problem here consists of determining the unknown fractional orders  $\beta, \alpha $ and $\gamma$ of problem \eqref{eq1} from the  additional condition \eqref{AddData}. For some technical reasons in the proof of existence and uniqueness of a solution to our inverse problem, we require  the initial condition in \eqref{eq1} to satisfy either
\begin{equation}\label{RestF}
\langle f, \psi_n\rangle>0  \ \ \text{for} \ \ n\geq 1\ \text{and}\ n \ \text{odd} \ \ \ \text{ or} \ \  \langle f, \psi_n\rangle <0 \ \text{for} \ n\geq 1, \ \text{and}\ n \ \text{ odd}.
\end{equation}

To the best of our knowledge, there are not many works related to inverse
problems for the fractional diffusion equations involving fractional Laplacian, see \cite{GuNaSoy, TarSoy, TrPeSy}. Our current paper makes some contribution to this subject.

The next section is devoted to our main results, i.e the statement and the proof of the existence and uniqueness theorem for our inverse problem.
\section{Statement and proof of main results}\label{Sect3}
In this section, we state and prove the existence and uniqueness theorem. First, we prove
an existence theorem for a solution of the inverse problem. There are two main methods
in the literature to prove existence of the solution of inverse problems for the classical
diffusion equations: the monotonicity method \cite{TrPeSy, DuchTas,  Tar} and the quasi-solution method \cite{LiuTar, TrPeSy}.

\noindent In this article, we use the quasi-solution method. For this purpose, let \\ $(\beta,\alpha,\gamma)\in [\beta_0,\beta_1]\times[\alpha_0,\alpha_1]\times[\gamma_0,\gamma_1]$, where   $\beta_0, \alpha_0, \gamma_0>0$, $\alpha_1, \gamma_1<2$ and $ \beta_1<1$. For notation sake, we denote the unique solution to the direct problem corresponding to the parameter $(\beta,\alpha,\gamma)$ 
as $u(\beta,\alpha,\gamma)(t,x).$

For a given target function $\varphi\in L^2(0,T],$ we define the following minimization problem 
\begin{equation}\label{MinPrb}
\min\limits_{(\beta,\alpha,\gamma)\in [\beta_0,\beta_1]\times[\alpha_0,\alpha_1]\times[\gamma_0,\gamma_1]} {\Big\| u(\beta,\alpha,\gamma)(t,0)-\varphi(t)\Big\|}_{L^2(0,T]}.
\end{equation}

Next, define the input-output mapping

\begin{equation}\label{InOutMap}
\begin{split}
F(\beta,\alpha,\gamma)(t): [\beta_0,\beta_1]\times &[\alpha_0,\alpha_1]\times[\gamma_0,\gamma_1]\rightarrow L^2(0,T]\\
& (\beta,\alpha,\gamma)\mapsto u(\beta,\alpha,\gamma)(t,0).
\end{split}
\end{equation}
This mapping is well defined. To see this, use \eqref{WkSol} and Lemma \ref{Mtg2} to get 
\begin{align*}
\int_0^T |u(\beta,\alpha,\gamma)(t,0)|^2dt \leq  \int_0^T \sum_{\substack{n\geq 1\\ n \ \text{is odd}}}  \Big|E_\beta(-\mu_nt^\beta)\langle f, \psi_n\rangle \Big|^2 \leq C_0 \int_0^T {\|f\|}_{L^2(D)}^2 dt<\infty
\end{align*}
since $f\in L^2(D)$ and $0<T<\infty$.\\
We now proceed to prove a very important theorem about the input-output mapping \eqref{InOutMap}
\begin{theorem}\label{InOutThm}
The input-output mapping defined in \eqref{InOutMap} is Lipschitz continuous.
\end{theorem}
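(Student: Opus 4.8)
The plan is to estimate the difference $F(\beta,\alpha,\gamma)(t) - F(\tilde\beta,\tilde\alpha,\tilde\gamma)(t) = u(\beta,\alpha,\gamma)(t,0) - u(\tilde\beta,\tilde\alpha,\tilde\gamma)(t,0)$ in the $L^2(0,T]$ norm by a constant multiple of $|\beta-\tilde\beta| + |\alpha-\tilde\alpha| + |\gamma-\tilde\gamma|$. Using the eigenfunction expansion \eqref{WkSol}, both outputs are series $\sum_{n\ \text{odd}} E_\beta(-\mu_n t^\beta)\langle f,\psi_n\rangle \psi_n(0)$ (only odd $n$ survive since $\psi_n(0) = \sin(n\pi/2)$ vanishes for even $n$), so the difference splits termwise into three contributions: the change in $\beta$ inside the Mittag-Leffler function, the change in $\mu_n = \bar\mu_n^{\alpha/2} + \bar\mu_n^{\gamma/2}$ coming from $\alpha$, and the analogous change coming from $\gamma$. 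First I would write the termwise difference as a telescoping sum and apply the mean value theorem in each parameter separately, reducing everything to bounds on the partial derivatives $\partial_\beta E_\beta(-\mu_n t^\beta)$ and $\partial_{\mu}\big[E_\beta(-\mu t^\beta)\big] = t^\beta E_\beta'(-\mu t^\beta)$, together with $\partial_\alpha \mu_n = \tfrac12 \bar\mu_n^{\alpha/2}\ln\bar\mu_n$ and the corresponding $\gamma$ derivative.

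Next I would control these derivatives uniformly over the compact parameter box $[\beta_0,\beta_1]\times[\alpha_0,\alpha_1]\times[\gamma_0,\gamma_1]$. For the $\mu$-derivative, Lemma \ref{Mtg3} (complete monotonicity and boundedness of all derivatives of $E_\beta(-z)$ on $(0,\infty)$), combined with Lemma \ref{Mtg2} applied to $E_{\beta,\beta}$, gives $|t^\beta E_\beta'(-\mu_n t^\beta)| \le C t^\beta \mu_n^{-1}(1+\mu_n t^\beta)^{-1} \lesssim \mu_n^{-1}$ type decay after absorbing powers of $t$; the key point is that the factor $\mu_n^{-1}$ or better beats the logarithmic growth of $\partial_\alpha\mu_n \sim \bar\mu_n^{\alpha/2}\ln\bar\mu_n$ and of $\partial_\gamma\mu_n$, since $\bar\mu_n^{\gamma/2} \le \mu_n$ and $\bar\mu_n \sim n^2$, so the resulting series $\sum_{n\ \text{odd}} (\text{bound})^2 \langle f,\psi_n\rangle^2$ is dominated by $\|f\|_{L^2(D)}^2$. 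For the $\beta$-derivative one differentiates the series \eqref{mtglfr} term by term: $\partial_\beta E_\beta(-\mu_n t^\beta)$ produces factors involving $\ln t$, digamma terms $\psi(\beta k + 1)$, and $\ln\mu_n$, and one must check this is integrable in $t$ over $(0,T]$ and summable in $n$; here I would again use the asymptotic decay of Mittag-Leffler-type quantities from Lemmas \ref{Mtg1}--\ref{Mtg2} to get a bound of the form $C(1 + |\ln t| + \ln\mu_n)\mu_n^{-1}$, whose square times $\langle f,\psi_n\rangle^2$ sums and whose $t$-integral over $(0,T]$ is finite. Then I would integrate the termwise estimates in $t$, sum in $n$, and take square roots to obtain the Lipschitz bound with an explicit constant depending only on $T$, the parameter box, and $\|f\|_{L^2(D)}$.

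The main obstacle will be the $\beta$-derivative estimate: differentiating $E_\beta(-\mu_n t^\beta)$ in $\beta$ is genuinely delicate because it differentiates both the power $t^\beta$ in the argument and the gamma functions $\Gamma(\beta k + 1)$ in every term of the defining series, and one needs the $\ln\mu_n$ growth to be tamed by the decay of $E_\beta'$ while simultaneously keeping the $\ln t$ singularity at $t=0$ integrable. A clean way to handle this without manipulating the series directly is to use the integral representation of $E_\beta$ (or of the resolvent kernel $V(t)$ in \eqref{Vg}) and differentiate under the integral sign, or alternatively to invoke known bounds on $\partial_\beta E_\beta$ from the literature on fractional equations; either route gives a bound of the type $|\partial_\beta E_\beta(-z)| \le C(1+|\ln z|)/(1+z)$ for $z > 0$, which is exactly what is needed. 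The $\alpha$ and $\gamma$ parts are comparatively routine once the decay $\mu_n^{-1}$ is in hand, since the logarithm $\ln\bar\mu_n$ grows only like $\ln n$ while $\mu_n$ grows at least like $n^{\alpha_0}$.
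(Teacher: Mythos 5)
Your strategy is sound but it is genuinely different from the paper's. The paper does not differentiate the series for the trace $u(a)(t,0)$ term by term; instead it sets $y=u-v$ for the two parameter choices, observes that $y$ solves a problem of the form \eqref{Eq1ODE} with zero initial data and a source $h=I_1+I_2$ collecting the change in the spatial operator ($I_1$) and the change in the Caputo derivative ($I_2$), estimates these sources, and closes the argument with the Duhamel formula \eqref{SolEq1ODE} and the bound $\|V(t)z\|_{L^2(D)}\leq Ct^{\beta-1}\|z\|_{L^2(D)}$ from \eqref{FrLpVz}. In particular, the $\beta$-dependence is handled there not by differentiating $E_\beta$ in $\beta$ (your main technical burden), but by splitting the difference of the two Caputo kernels $(t-s)^{-\hat\beta}-(t-s)^{-\beta}$ and quoting the integral estimate of \cite{LiZhg} in \eqref{I22Est}; your route, differentiating $E_\beta(-\mu_nt^\beta)$ in $\beta$ with a bound of the type $|\partial_\beta E_\beta(-z)|\leq C(1+|\ln z|)/(1+z)$, is essentially what the paper does later anyway in the proof of Theorem \ref{DiffI}, so it is a legitimate (arguably more self-contained) alternative, at the cost of redoing the digamma/$\ln t$ bookkeeping here. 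The $\alpha,\gamma$ part of your argument is structurally identical to \eqref{I1Est}: both apply the mean value theorem to $x\mapsto\bar\mu_n^{x/2}$.

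One caveat you should address explicitly (the paper glosses over the same point): the mean value theorem produces the factor $\tfrac12\bar\mu_n^{\xi/2}\ln\bar\mu_n$, and your claim that the decay $|t^{\beta}E_\beta'(-\xi t^\beta)|\lesssim \xi^{-1}$ "beats" this is not uniform over the whole box, since the ratio $\bar\mu_n^{\xi_2/2}/\min(\mu_n,\tilde\mu_n)$ can grow like $\bar\mu_n^{|\gamma-\tilde\gamma|/2}$ when the two $\gamma$-values are far apart. So the resulting series is not dominated by $\|f\|^2_{L^2(D)}$ alone. The clean fix is either to invoke the standing regularity $f\in\dot{H}^{\alpha,\gamma}$ (so that $\sum_n\bar\mu_n^{\gamma_1}(\ln\bar\mu_n)^2\langle f,\psi_n\rangle^2<\infty$ absorbs the logarithm and the excess power), or to prove the Lipschitz bound only for $|a-\tilde a|$ small and combine it with the uniform boundedness of $F$ to cover distant parameter pairs. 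With that repair, your termwise argument goes through and yields the same conclusion as the paper's Duhamel-based proof.
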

\begin{proof}
For each fixed $x$, we regard $u(t,x)$ as a mapping from $t\in[0,T]$ to $L^2(D).$ So we write $u(t):=u(t,\cdot).$ Pick    $(\hat{\beta},\hat{\alpha},\hat{\gamma})$ and $(\beta,\alpha,\gamma)$ two distinct points from the hyperrectangle $ [\beta_0,\beta_1]\times[\alpha_0,\alpha_1]\times[\gamma_0,\gamma_1]$. Without loss of generality, assume  $\hat{\beta}>\beta$ (the other cases follow similarly). Let $u= u(\beta,\alpha,\gamma)$, $v= u(\hat{\beta},\hat{\alpha},\hat{\gamma})$ and $y=u-v$.\\

\noindent It is not hard to see that $y$ solves the following initial value problem:
\begin{equation}\label{EqYsolv}
\begin{cases}
&\partial_t^\beta y= \Big[-(-\Delta)^{\alpha/2}-(-\Delta)^{\gamma/2}\Big]y + \underbrace{\Big[(-\Delta)^{\hat{\alpha}/2}+(-\Delta)^{\hat{\gamma}/2}\Big]v+\Big[-(-\Delta)^{\alpha/2}-(-\Delta)^{\gamma/2}\Big]v}_{:= I_1} \underbrace{-\partial_t^\beta v+\partial_t^{\hat{\beta}} v}_{:=I_2}\\
&y(0)=0.
\end{cases}
\end{equation}
We now estimate quantities $I_1$ and $I_2$. Using \eqref{FrLpDef} and the Mean Value Theorem, we get

\begin{equation}\label{I1Est}
\begin{split}
I_1= &\sum_{n=1}^\infty c_n\Big[ \Big(\bar{\mu}_n^{\hat{\alpha}/2}+\bar{\mu}_n^{\hat{\gamma}/2}\Big)-\Big(\bar{\mu}_n^{\alpha/2}+\bar{\mu}_n^{\gamma/2}\Big)\Big]\psi_n(x)\\
\leq & C_1\sum_{n=1}^\infty c_n \Big[\big|\hat{\alpha}-\alpha\big|\bar{\mu}_n^{\xi_1/2}+ \big|\hat{\gamma}-\gamma\big|\bar{\mu}_n^{\xi_2/2}\Big]\psi_n(x)\\
= & C_1 \Big[\big|\hat{\alpha}-\alpha\big|\sum_{n=1}^\infty c_n\bar{\mu}_n^{\xi_1/2}\psi_n(x) +\big|\hat{\gamma}-\gamma\big|\sum_{n=1}^\infty c_n\bar{\mu}_n^{\xi_2/2} \psi_n(x)\Big]\\
= & C_1\Big[\big|\hat{\alpha}-\alpha\big|\big(-\Delta\big)^{\xi_1/2}v+\big|\hat{\gamma}-\gamma\big|\big(-\Delta\big)^{\xi_2/2}v\Big],\\
\end{split}
\end{equation}
where $\xi_1$ is a number between $\alpha$ and $\hat{\alpha}$   and $\xi_2$ is a number between  $\gamma$ and $ \hat{\gamma}$. 
So by estimate \eqref{IndLplInq}, we have

\begin{equation}\label{EstNrmI1}
\begin{split}
{\|I_1(t)\|}_{L^2(D)}\leq &   C_1\Bigg[\big|\hat{\alpha}-\alpha\big|{\Big\|\big(-\Delta\big)^{\xi_1/2}v\Big\|}_{L^2(D)}+\big|\hat{\gamma}-\gamma\big|{\Big\|\big(-\Delta\big)^{\xi_2/2}v\Big\|}_{L^2(D)}\Bigg]\\
\leq & C_2 \Bigg[\big|\hat{\alpha}-\alpha\big| t^{-{\xi_1}}+ \big|\hat{\gamma}-\gamma\big| t^{-{\xi_2}}\Bigg]{\big\|f\big\|}_{L^2(D)}.
\end{split}
\end{equation}
Next, to estimate $I_2$, we write

$$
I_2= \underbrace{\Bigg[ 1-\frac{\Gamma(1-\hat{\beta})}{\Gamma(1-\beta)}\Bigg]\frac{1}{\Gamma(1-\hat{\beta})}\int_0^t (t-s)^{-\hat{\beta}} v'(s)ds}_{:=I_{21}} +\underbrace{ \frac{1}{\Gamma(1-\beta)}\int_0^t \Big[(t-s)^{-\hat{\beta}}-(t-s)^{-\beta} \Big]v'(s)ds}_{:=I_{22}}.
$$
Using the Lipschitz continuity of the Euler's gamma function, the fact that $x\mapsto\frac{1}{\Gamma(1-x)}$ is bounded on $(0,1)$ and \eqref{LplInq} and noting that $f\in L^2(D)$, the following estimates holds for $I_{21}:$

\begin{equation}\label{I21Est}
\begin{split}
{\big\|I_{21}\big\|}_{L^2(D)}\leq &C_3\Big|\Gamma(1-\hat{\beta})-\Gamma(1-\beta) \Big|{\Big\|\partial_t^{\hat{\beta}} v\Big\|}^2_{L^2(D)}\\
\leq & C_4 \Big|\hat{\beta}-\beta \Big|t^{-\hat{\beta}}, \ \ \ 0<t\leq T.
\end{split}
\end{equation}
Next, by definition,

$$
{\|v'\|}_{L^2(D)}= C_5 t^{\hat{\beta}-1}{\Bigg\|\sum_{n=1}^\infty \mu_n E_{\hat{\beta},\hat{\beta}}(-\mu_nt^{\hat{\beta}})\langle f, \psi_n\rangle \psi_n\Bigg\|}_{L^2(D)}
$$
Thus, since $\Big(\psi_n\Big)_{n\geq 1}$ is an orthonormal basis of $L^2(D)$, we get
\begin{equation*}
{\Bigg\|\sum_{n=1}^\infty \mu_n E_{\hat{\beta},\hat{\beta}}(-\mu_nt^{\hat{\beta}})\langle f, \psi_n\rangle \psi_n\Bigg\|}_{L^2(D)}\leq C_6 \Bigg(\sum_{n=1}^\infty\mu_n^2\langle f, \psi_n\rangle^2\Bigg)^{\frac{1}{2}}\leq C_7 {\Big\|\mathtt{L}^{\hat{\alpha}, \hat{\gamma}}_D f\Big\|}_{L^2(D)}\leq C_8{\|f\|}_{\dot{H}^{\hat{\alpha},\hat{\gamma}}}, \ \ \ 0<t\leq T.
\end{equation*}
Since $f\in \dot{H}^{\hat{\alpha},\hat{\gamma}}$, this implies that 

\begin{equation}\label{NrmVP}
{\|v'\|}_{L^2(D)}= C_9 t^{\hat{\beta}-1}, \ \ \ 0<t\leq T.
\end{equation}
We then proceed to get an estimate on $I_{22}:$

\begin{equation}\label{I22Est}
{\big\|I_{22}(t)\big\|}_{L^2(D)}\leq C_{10} \int_0^t \Big|(t-s)^{-\hat{\beta}}-(t-s)^{-\beta} \Big|{\|v'(s)\|}_{L^2(D)}ds\leq C_{11} \int_0^t \Big|(t-s)^{-\hat{\beta}}-(t-s)^{-\beta} \Big|s^{\hat{\beta}-1} ds \leq C_{12} \big|\hat{\beta}-\beta\big|,
\end{equation}
where the last inequality follows from \cite[Pages 16-17]{LiZhg}. Finally,

\begin{equation}\label{I2Est}
{\big\|I_2(t)\big\|}_{L^2(D)}\leq C_{13} \big|\hat{\beta}-\beta\big|\Big(1+t^{\hat{\beta}-1}\Big), \ \ \ 0<t\leq T.
\end{equation}
Next, for any function $z\in \dot{H}^{\alpha,\gamma}(D)$,  using Parseval identity and Lemma \ref{Mtg2} as well as \eqref{Vg}, we have


\begin{equation}\label{FrLpVz}
\begin{split}
{\Big\|V(t)z\Big\|}_{L^2(D)}= & {\Bigg\|\sum_{n=1}^\infty  t^{\beta-1}E_{\beta,\beta}(-\mu_nt^{\beta})\langle z, \psi_n\rangle\psi_n\Bigg\|}_{L^2(D)}\\
\leq  & C_{14}t^{\beta-1} \Bigg( \sum_{n=1}^\infty \langle z, \psi_n\rangle^2  \Bigg)^{\frac{1}{2}}\\
= & C_{14} t^{\beta-1}{\|z\|}_{L^2(D)}.
\end{split}
\end{equation}
We now solve \eqref{Eq1ODE} with $g\equiv 0$ and $h=I_1+I_2$ to get

\begin{equation*}
\begin{split}
{\Big\|y(t)\Big\|}_{L^2(D)}& ={\Bigg\|\int_0^t V(t-s)\Big(I_1(s)+ I_2(s)\Big)ds\Bigg\|}_{L^2(D)}\\
& \leq C_{15}\int_0^t (t-s)^{\beta-1}\Big({\big\|I_1(s)\big\|}_{L^2(D)} + {\big\|I_2(s)\big\|}_{L^2(D)}\Big)ds\\
&\leq C_{16}\int_0^t (t-s)^{\beta-1} \Bigg[|\hat{\alpha}-\alpha| s^{-\xi_1} + |\hat{\gamma}-\gamma| s^{-\xi_2}
+\big|\hat{\beta}-\beta\big|\Big(1+s^{\hat{\beta}-1}\Big) \Bigg]ds\\
=& C_{16}\Bigg[t^{\beta-\xi_1}\textbf{B}(\beta, 1-\xi_1)|\hat{\alpha}-\alpha|+t^{\beta-\xi_2}\textbf{B}(\beta, 1-\xi_2)|\hat{\gamma}-\gamma|+\Big(\beta^{-1}t^{\beta}+t^{\beta+\hat{\beta}-1}\textbf{B}(\beta, \hat{\beta})\Big)\big|\hat{\beta}-\beta\big|\Bigg],
\end{split}
\end{equation*}
where $\textbf{B}(a,b)=\int_0^1 w^{a-1}(1-w)^{b-1}dw$ is the Euler's Beta function for $a,b>0.$ Therefore there exists a positive constant C for which
$${\Big\|y(t)\Big\|}_{L^2(D)}\leq C\Big(\big|\hat{\beta}-\beta\big|+|\hat{\alpha}-\alpha|+\hat{\gamma}-\gamma|\Big) \ \ \text{for all} \ 0<t\leq T$$ and this concludes the proof.
\end{proof}
\noindent For practical use in the sequel, we define the following function

\begin{equation}\label{FuncI}
I(a):= {\big\|u(a)(t,0)-\varphi(t)\big\|}_{L^2(0,T]}^2,
\end{equation}
with $a=(\beta, \alpha, \gamma)\in [\beta_0,\beta_1]\times[\alpha_0,\alpha_1]\times[\gamma_0,\gamma_1].$

\noindent A usual argument on the compactness of the hyperrectangle $[\beta_0,\beta_1]\times[\alpha_0,\alpha_1]\times[\gamma_0,\gamma_1]\subset\mathbb{R}^3$ yields the following existence theorem.
\begin{theorem}
There exists $a^*= (\beta^*, \alpha^*, \gamma^*)\in [\beta_0,\beta_1]\times[\alpha_0,\alpha_1]\times[\gamma_0,\gamma_1]$ such that

$$
I(a^*)\leq I(a) \ \ \ \text{for all} \ \ a\in [\beta_0,\beta_1]\times[\alpha_0,\alpha_1]\times[\gamma_0,\gamma_1].
$$
\end{theorem}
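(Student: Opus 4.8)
The plan is to recognize this as the abstract existence half of the inverse problem, provable by the Weierstrass extreme value theorem: a real-valued continuous function on a nonempty compact set attains its minimum. The hyperrectangle $K := [\beta_0,\beta_1]\times[\alpha_0,\alpha_1]\times[\gamma_0,\gamma_1]$ is nonempty, closed and bounded in $\mathbb{R}^3$, hence compact by Heine--Borel, so the entire content reduces to verifying that $a \mapsto I(a)$ is continuous on $K$.

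First I would rewrite $I(a) = \|F(a) - \varphi\|_{L^2(0,T]}^2$, with $F$ the input-output mapping \eqref{InOutMap} and $\varphi \in L^2(0,T]$ the fixed target. Theorem \ref{InOutThm} gives that $F$ is Lipschitz continuous, so by the reverse triangle inequality $\big|\,\|F(\hat a) - \varphi\| - \|F(a) - \varphi\|\,\big| \le \|F(\hat a) - F(a)\|_{L^2(0,T]} \le C\,|\hat a - a|$, i.e. $a \mapsto \|F(a) - \varphi\|_{L^2(0,T]}$ is Lipschitz on $K$. Composing with the squaring map, which is Lipschitz on the bounded range of $a \mapsto \|F(a)-\varphi\|$ (the boundedness coming from the well-definedness estimate recorded right after \eqref{InOutMap} together with $\varphi\in L^2(0,T]$), I get that $I$ is continuous---indeed Lipschitz---on $K$.

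Then I would conclude by compactness in the usual way: let $m := \inf_{a\in K} I(a) \ge 0$ and take a minimizing sequence $(a_k) \subset K$ with $I(a_k) \to m$; by compactness of $K$ a subsequence $a_{k_j}$ converges to some $a^* = (\beta^*,\alpha^*,\gamma^*) \in K$, and by continuity of $I$ we get $I(a^*) = \lim_j I(a_{k_j}) = m \le I(a)$ for every $a \in K$, which is exactly the assertion.

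I do not expect any genuine obstacle here; the statement follows immediately once Theorem \ref{InOutThm} is available. The only step deserving a line of care is the passage from Lipschitz continuity of the $L^2$-valued map $F$ to continuity of the scalar squared-norm functional $I$, and that is handled by the reverse triangle inequality as above. Accordingly I would keep this proof to just a few lines.
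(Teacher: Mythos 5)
Your proposal is correct and follows exactly the route the paper intends: the paper dispatches this theorem with the one-line remark that ``a usual argument on the compactness of the hyperrectangle'' together with the continuity of the input-output mapping (Theorem \ref{InOutThm}) yields existence, which is precisely your Weierstrass/minimizing-sequence argument. Your extra care in passing from Lipschitz continuity of the $L^2$-valued map $F$ to continuity of the scalar functional $I$ via the reverse triangle inequality is a detail the paper leaves implicit, but it is the same proof.
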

 \noindent The next theorem provides the uniqueness of the solution to our inverse problem.
\begin{theorem}
Let $u$ be the weak solution of \eqref{eq1} and let $v$  be the weak solution of the following problem
\begin{equation}\label{UniqP}
\begin{cases}
&\partial_t^{\hat{\beta}} v(t,x)= -(-\Delta)^{\hat{\alpha}/2}v(t,x)- (-\Delta)^{\hat{\gamma}/2}v(t,x),  \ t>0, \ \ x\in D, \ \\
& v(t, -1) =  v(t,1)=0,   ~~0 < t \leq  T,\\
& v(0, x) = f(x), \ x\in D,
\end{cases}
\end{equation}
where $0<\hat{\beta}<1\ \ \text{and} \ \  0<\hat{\alpha}\leq\hat{\gamma}<2$.
$$ \text{If}\  u(t,0)=v(t,0), \ 0<t\leq T \ \text{ and} \   \eqref{RestF} \ \text{holds, then} \  \beta=\hat{\beta}, \ \  \alpha= \hat{\alpha} \ \ \text{and} \ \ \gamma=\hat{\gamma}.$$
\end{theorem}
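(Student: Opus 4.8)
Here is how I would approach the uniqueness statement.

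The plan is to turn the equality of the observed data into an identity between two Mittag--Leffler series, pass to Laplace transforms, and then read off the three exponents from the meromorphic structure of the transformed expressions. First I would use \eqref{WkSol} together with $\psi_n(0)=\sin(n\pi/2)$ — which vanishes for even $n$ and equals $(-1)^{(n-1)/2}$ for odd $n$ — to write
\[
u(t,0)=\sum_{n\ \mathrm{odd}} a_n\,E_\beta(-\mu_n t^\beta),\qquad
v(t,0)=\sum_{n\ \mathrm{odd}} a_n\,E_{\hat\beta}(-\hat\mu_n t^{\hat\beta}),
\]
with $a_n:=(-1)^{(n-1)/2}\langle f,\psi_n\rangle$, $\mu_n=(n\pi/2)^\alpha+(n\pi/2)^\gamma$ and $\hat\mu_n=(n\pi/2)^{\hat\alpha}+(n\pi/2)^{\hat\gamma}$; note that \eqref{RestF} forces $a_n\neq 0$ for every odd $n$. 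By Lemma \ref{Mtg2} and the decay of $(a_n)$ implied by $f\in\dot{H}^{\alpha,\gamma}$, each series converges uniformly on compacta of a complex sector about the positive real axis and is analytic in $t$ there, so the hypothesis $u(t,0)=v(t,0)$ on $(0,T]$ propagates to all $t>0$. Applying the Laplace transform, with $\int_0^\infty e^{-st}E_\beta(-\mu t^\beta)\,dt=s^{\beta-1}/(s^\beta+\mu)$ and termwise integration justified by the same estimates, gives for $s>0$
\[
s^{\beta-1}\sum_{n\ \mathrm{odd}}\frac{a_n}{s^\beta+\mu_n}=s^{\hat\beta-1}\sum_{n\ \mathrm{odd}}\frac{a_n}{s^{\hat\beta}+\hat\mu_n}.
\]
Both sides extend analytically to $\mathbb{C}\setminus(-\infty,0]$ (there the principal branches $s^\beta,s^{\hat\beta}$ stay off $(-\infty,0]$, so no denominator vanishes), so the identity persists on that domain.

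Next I would continue both sides around the origin and compare their first singularities. Set $H(\zeta):=\sum_{n\ \mathrm{odd}} a_n/(\zeta+\mu_n)$, which is meromorphic on $\mathbb{C}$ with simple poles exactly at the points $-\mu_n$ (recall $\mu_1<\mu_3<\cdots$) of residue $a_n\neq 0$, and likewise $\hat H$ with poles at $-\hat\mu_n$. The left side equals $s^{\beta-1}H(s^\beta)$ and continues analytically to the sector $\{-\pi<\arg s<\pi/\beta\}$, blowing up as $s\to\mu_1^{1/\beta}e^{i\pi/\beta}$; the right side equals $s^{\hat\beta-1}\hat H(s^{\hat\beta})$ and continues to $\{-\pi<\arg s<\pi/\hat\beta\}$, blowing up at $\hat\mu_1^{1/\hat\beta}e^{i\pi/\hat\beta}$. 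If, say, $\beta<\hat\beta$, then $\pi/\beta>\pi/\hat\beta$, so the point $s_{*}:=\hat\mu_1^{1/\hat\beta}e^{i\pi/\hat\beta}$ lies in the interior of the domain of the left side — there $s_{*}^\beta=\hat\mu_1^{\beta/\hat\beta}e^{i\pi\beta/\hat\beta}\notin(-\infty,0]$ — so the left side is analytic, hence finite, at $s_{*}$; but the right side, which agrees with it on the common sector, tends to $\infty$ as $s\to s_{*}$, a contradiction. The case $\beta>\hat\beta$ is symmetric. Hence $\beta=\hat\beta$.

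With $\beta=\hat\beta$ the identity reads $H(s^\beta)=\hat H(s^\beta)$ for $s>0$, i.e.\ $H$ and $\hat H$ agree on $(0,\infty)$; being meromorphic on $\mathbb{C}$ they must coincide, so their pole sets coincide, giving $\mu_n=\hat\mu_n$ for every odd $n$ (both sequences being strictly increasing). Therefore $x^\alpha+x^\gamma=x^{\hat\alpha}+x^{\hat\gamma}$ for every $x$ in the unbounded set $\{\,n\pi/2: n\ \mathrm{odd}\,\}\subset(1,\infty)$. Since $\alpha\le\gamma$ and $\hat\alpha\le\hat\gamma$, dividing by the larger of $x^\gamma,x^{\hat\gamma}$ and letting $x\to\infty$ forces $\gamma=\hat\gamma$ (otherwise one side grows strictly faster); then $x^\alpha=x^{\hat\alpha}$ at any $x>1$ gives $\alpha=\hat\alpha$, which completes the proof.

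I expect the main obstacle to be the singularity comparison in the second step: one must take the analytic continuations of the two transformed series along the same path onto the Riemann surface of the logarithm, check that the candidate boundary singularities are genuine (which is exactly where $a_1\neq 0$, i.e.\ \eqref{RestF}, is needed), and organize the case distinction $\beta\lessgtr\hat\beta$. By contrast, the reductions used above — analyticity in $t$, termwise Laplace transform, and convergence of $H$ — are routine consequences of Lemmas \ref{Mtg1}--\ref{Mtg2} and the standing assumptions on $f$, and the final elementary step with $x^\alpha+x^\gamma=x^{\hat\alpha}+x^{\hat\gamma}$ is immediate.
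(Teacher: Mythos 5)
Your proof is correct, and its second half coincides with the paper's: once $\beta=\hat\beta$ is known, both you and the paper take Laplace transforms, reduce to the partial-fraction identity \eqref{LstEq}, match poles/residues to conclude $\mu_n=\lambda_n$ for all odd $n$, and finish with the same elementary growth comparison of $x^{\alpha}+x^{\gamma}$ against $x^{\hat\alpha}+x^{\hat\gamma}$ as $x\to\infty$. Where you genuinely diverge is the first (and most delicate) step, $\beta=\hat\beta$. The paper stays in the time domain: it applies the large-argument expansion \eqref{Mitg1} of Lemma \ref{Mtg1} termwise to \eqref{EqWSol}, multiplies by $t^{\hat\beta}$ under the assumption $\beta>\hat\beta$, and lets $t\to\infty$, deriving the contradiction \eqref{Contrd} from the nonvanishing of the full series $\sum_n \langle f,\psi_n\rangle/\lambda_n$. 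You instead transform first and compare the locations of the nearest singularities of the continuations $s^{\beta-1}H(s^{\beta})$ and $s^{\hat\beta-1}\hat H(s^{\hat\beta})$ on the log-Riemann surface, exploiting that the sectors of analyticity $|\arg s|<\pi/\beta$ and $|\arg s|<\pi/\hat\beta$ can only match when $\beta=\hat\beta$. Your route costs an analytic-continuation argument past $\arg s=\pi$, but it buys two things: it needs only the single residue $a_1=\pm\langle f,\psi_1\rangle\neq 0$ to certify a genuine blow-up, rather than the nonvanishing of an infinite sum; and it is insensitive to the alternating signs $\psi_n(0)=(-1)^{(n-1)/2}$, which you track explicitly and which the paper silently absorbs into $\langle f,\psi_n\rangle$ --- under \eqref{RestF} the sum $\sum_n(-1)^{(n-1)/2}\langle f,\psi_n\rangle/\lambda_n$ appearing in the paper's contradiction is not obviously nonzero, so on this point your version is actually the more robust one. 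Both arguments rest on the same routine (and in both write-ups largely unverified) summability and interchange-of-limits hypotheses on the coefficients $\langle f,\psi_n\rangle$.
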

\begin{proof}
	The proof follows a similar argument as in \cite{GuNaSoy}.
Using the explicit formula \eqref{WkSol}, the weak solutions $u$ and $v$ can be written as
\begin{equation}\label{PwExpU}
u(t,x)= \sum_{n=1}^{\infty} E_\beta(-\mu_nt^\beta)\langle f, \psi_n\rangle \psi_n(x)
\end{equation}
and
\begin{equation}\label{PwExpV}
v(t,x)= \sum_{n=1}^{\infty} E_{\hat{\beta}}(-\lambda_nt^{\hat{\beta}})\langle f, \psi_n\rangle \psi_n(x),
\end{equation}
where  $\mu_k= \bar{\mu}_k^{\alpha/2}+\bar{\mu}_k^{\gamma/2}$ and $\lambda_k=  \bar{\mu}_k^{\hat{\alpha}/2}+\bar{\mu}_k^{\hat{\gamma}/2}$ and the eigenpair $(\bar{\mu}_k, \psi_k)$ solves \eqref{Helmh}. 

\noindent Therefore, if we assume $u(t,0)=v(t,0) \ \ \text{for}\  \ 0<t\leq T,$ we get the following equation

\begin{equation}\label{EqWSol}
\sum_{\substack{n\geq 1\\ n \ \text{is odd}}} E_\beta(-\mu_nt^\beta)\langle f,  \psi_n\rangle= \sum_{\substack{n\geq 1\\ n \ \text{is odd}}} E_{\hat{\beta}}(-\lambda_nt^{\hat{\beta}})\langle f,  \psi_n\rangle, \ \ \ 0<t<T.
\end{equation}
Because both series in \eqref{EqWSol} are analytic in the domain $\Re t>0,$ it follows that

\begin{equation}\label{EqWSol1}
\sum_{\substack{n\geq 1\\ n \ \text{is odd}}} E_\beta(-\mu_nt^\beta)\langle f,  \psi_n\rangle= \sum_{\substack{n\geq 1\\ n \ \text{is odd}}} E_{\hat{\beta}}(-\lambda_nt^{\hat{\beta}})\langle f,  \psi_n\rangle, \ \ \ t>0.
\end{equation}
Next, we use the  asymptotic property of the Mittag-Leffler function  \eqref{Mitg1} to obtain, by adding and subtracting the term $\frac{1}{\Gamma(1-\beta)}\frac{1}{\mu_n t^\beta}$ in the left side term in \eqref{EqWSol},  the following asymptotic equation

\begin{equation}\label{AsyEqWSolU}
\begin{split}
\sum_{\substack{n\geq 1\\ n \ \text{is odd}}} E_\beta(-\mu_nt^{\beta})\langle f, \psi_n\rangle
=\sum_{\substack{n\geq 1\\ n \ \text{is odd}}}\langle f, \psi_n\rangle\frac{1}{\Gamma(1-\beta)}\frac{1}{\mu_n t^\beta} +O(|t|^{-2\beta}).
\end{split}
\end{equation}
Similarly,
\begin{equation}\label{AsyEqWSolV}
\begin{split}
\sum_{\substack{n\geq 1\\ n \ \text{is odd}}} E_{\hat{\beta}}(-\lambda_nt^{\hat{\beta}})\langle f, \psi_n\rangle
= \sum_{\substack{n\geq 1\\ n \ \text{is odd}}}\langle f, \psi_n\rangle\frac{1}{\Gamma(1-\hat{\beta})}\frac{1}{\lambda_n t^{\hat{\beta}}} +O(|t|^{-2\hat{\beta}}).
\end{split}
\end{equation}
Now combining \eqref{EqWSol1}, \eqref{AsyEqWSolU} and \eqref{AsyEqWSolV}, we get, as $t\rightarrow\infty$

\begin{equation}\label{AsymEqWSol}
\sum_{\substack{n\geq 1\\ n \ \text{is odd}}}\langle f, \psi_n\rangle\frac{1}{\Gamma(1-\beta)}\frac{1}{\mu_n t^\beta} +O(|t|^{-2\beta})= \sum_{\substack{n\geq 1\\ n \ \text{is odd}}}\langle f, \psi_n\rangle\frac{1}{\Gamma(1-\hat{\beta})}\frac{1}{\lambda_n t^{\hat{\beta}}} +O(|t|^{-2\hat{\beta}}).
\end{equation}
Now assume, for example, that $\beta>\hat{\beta}$. Then multiply \eqref{AsymEqWSol} by $t^{\hat{\beta}}$ to get
\begin{equation}\label{FinEqWsol}
t^{\hat{\beta}-\beta}\sum_{\substack{n\geq 1\\ n \ \text{is odd}}}\langle f, \psi_n\rangle\frac{1}{\Gamma(1-\beta)}\frac{1}{\mu_n } +O(|t|^{\hat{\beta}-2\beta})- \sum_{\substack{n\geq 1\\ n \ \text{is odd}}}\langle f, \psi_n\rangle\frac{1}{\Gamma(1-\hat{\beta})}\frac{1}{\lambda_n } +O(|t|^{-\hat{\beta}})=0.
\end{equation}
Letting $t\rightarrow\infty$ in \eqref{FinEqWsol} yields

\begin{equation}\label{Contrd}
\sum_{\substack{n\geq 1\\ n \ \text{is odd}}}\langle f, \psi_n\rangle\frac{1}{\Gamma(1-\hat{\beta})}\frac{1}{\lambda_n }=0: \ \ \text{a contradiction to \eqref{RestF}!}
\end{equation}
Similarly, assuming $\hat{\beta}>\beta$ also leads to a contradiction. Thus $\beta=\hat{\beta}.$

We now prove the second part of the Theorem, i.e $\alpha= \hat{\alpha}$ and $\gamma=\hat{\gamma}.$ To this aim, we will show that $\mu_n=\lambda_n$ for all $n\geq 1, \ n\ \text{is odd}.$ Since $\beta=\hat{\beta}$, \eqref{EqWSol} becomes

\begin{equation}\label{EqWSolStm}
\sum_{\substack{n\geq 1\\ n \ \text{is odd}}} E_\beta(-\mu_nt^\beta)\langle f, \psi_n\rangle= \sum_{\substack{n\geq 1\\ n \ \text{is odd}}} E_\beta(-\lambda_nt^\beta)\langle f, \psi_n\rangle.
\end{equation}
Taking the Laplace transform of $E_\beta(-\mu_nt^\beta)$ yields

\begin{equation}\label{LapTrf}
\int_0^\infty e^{-zt}E_\beta(-\mu_n t^\beta) dt= \frac{z^{\beta-1}}{z^\beta+\mu_n}, \ \ \Re z>0.
\end{equation}
Furthermore, taking the Laplace transform of the Mittag-Leffler function term by term,  we get

\begin{equation}\label{MitgLefTrmbT}
\int_0^\infty e^{-zt}E_\beta(-\mu_n t^\beta) dt= \frac{z^{\beta-1}}{z^\beta+\mu_n}, \ \ \Re z>\mu_n^{1/\beta}.
\end{equation}
Since $\sup\limits_{t\geq 0}\big|E_\beta(-\mu_nt^\beta)\big|<\infty$ by Lemma \ref{Mtg3}, this implies that $\int_0^\infty e^{-zt}E_\beta(-\mu_n t^\beta) dt$ is analytic in the domain $\Re z>\mu_n^{1/\beta}.$ Then by analytic continuity, $\int_0^\infty e^{-zt}E_\beta(-\mu_n t^\beta) dt$ is analytic in the domain $\Re z>0.$\\
Using Lemma \ref{Mtg3} and Lebesgue's convergence Theorem, we get that $$e^{-t\Re z} t^{-\beta} \ \ \text{is integrable for } \ \ t\in(0,\infty) \ \ \text{with fixed} \ z \ \ \text{such that} \ \Re z>0$$ and
\begin{align*}
\Big|e^{-t\Re z} \sum_{\substack{n\geq 1\\ n \ \text{is odd}}} E_\beta(-\mu_nt^\beta)\langle f, \psi_n\rangle\Big|\leq& C_0 e^{-t\Re z}\Bigg(\sum_{\substack{n\geq 1\\ n \ \text{is odd}}}\langle f, \psi_n\rangle \frac{1}{\mu_nt^{\beta}} \Bigg)\\
\leq & C_0^{'} e^{-t\Re z} t^{-\beta}\sum_{\substack{n\geq 1\\ n \ \text{is odd}}} \langle f, \psi_n\rangle <\infty.
\end{align*}

\noindent Next, for fixed $z$ satisfying $\Re z>0$, we have

\begin{equation}\label{LpTrEMu}
\int_0^\infty e^{-t z} \sum_{\substack{n\geq 1\\ n \ \text{is odd}}} E_\beta(-\mu_nt^\beta)\langle f, \psi_n\rangle dt= \sum_{\substack{n\geq 1\\ n \ \text{is odd}}}\langle f, \psi_n\rangle \frac{z^{\beta-1}}{z^\beta+\mu_n}.
\end{equation}
Similarly,

\begin{equation}\label{LpTrELbd}
\int_0^\infty e^{-t z} \sum_{n=1}^{\infty} E_\beta(-\lambda_nt^\beta)\langle f, \psi_n\rangle dt= \sum_{n=1}^{\infty}\langle f, \psi_n\rangle \frac{z^{\beta-1}}{z^\beta+\lambda_n}.
\end{equation}
This means, by \eqref{EqWSolStm}, \eqref{LpTrEMu} and \eqref{LpTrELbd},

\begin{equation}\label{LstEq}
\sum_{\substack{n\geq 1\\ n \ \text{is odd}}} \frac{\langle f, \psi_n\rangle}{\rho+\mu_n}= \sum_{\substack{n\geq 1\\ n \ \text{is odd}}} \frac{\langle f, \psi_n\rangle}{\rho+\lambda_n}, \ \ \Re \rho >0.
\end{equation}

\noindent The last equality is equivalent to 

\begin{equation}\label{LlastEq}
\sum_{\substack{n\geq 1\\ n \ \text{is odd}}}\frac{\langle f, \psi_n\rangle(\mu_n-\lambda_n)}{(\rho+\mu_n)(\rho+\lambda_n)}=0, \ \ \Re \rho >0
\end{equation}
i.e $\mu_n=\lambda_n$ for all $n\geq 1, \ n$ is odd. Next, by the definition (value) of $\mu_n$ and $\lambda_n$, we have 
\begin{equation}\label{EqEig}
\big(n\pi/2\big)^{\alpha}+\big(n\pi/2\big)^{\gamma}= \mu_n=\lambda_n=\big(n\pi/2\big)^{\hat{\alpha}}+\big(n\pi/2\big)^{\hat{\gamma}}.
\end{equation}

\noindent Therefore, defining $C_\upsilon:=(\pi/2)^\upsilon, \ \upsilon>0$, we have the following bounds for the eigenvalues: for all $n\geq 1, \ n$  odd,
\begin{equation}\label{EigVAl}
C_\alpha \big(n^\alpha+n^\gamma\big)\leq \mu_n\leq C_\gamma \big(n^\alpha+n^\gamma\big)
\end{equation}
and 
\begin{equation}\label{EigVhAl}
C_{\hat{\alpha}} \big(n^{\hat{\alpha}}+n^{\hat{\gamma}}\big)\leq \mu_n\leq C_{\hat{\gamma}} \big(n^{\hat{\alpha}}+n^{\hat{\gamma}}\big).
\end{equation}
Assume for example that $\gamma<\hat{\gamma}$, then combining equations \eqref{EigVAl} and \eqref{EigVhAl} gives, for positive constants $\kappa_1$ and $\kappa_2$,
$$
\kappa_1 n^{\hat{\gamma}}\leq \mu_n\leq \kappa_2n^{\gamma}  \ \ \text{for all} \ n\geq 1, \ n \ \text{odd}: \ \text{a contradiction!}
$$
Therefore $\gamma=\hat{\gamma}$ since the reverse inequality also leads to a contradiction. Finally by equation \eqref{EqEig}, this also means that $\alpha=\hat{\alpha}$  and this concludes the proof.
\end{proof}

\noindent We now describe the algorithm used to find the solution of our inverse problem.
\section{The inversion algorithm}\label{Sect4}

The inversion algorithm is based on the minimization of the error functional $I (a)$, which
is defined by \eqref{FuncI}. We note that the continuity, hence the existence
of the minimum of the functional on a compact set has been established in the previous
section which is not enough to set up an efficient search algorithm for the minimum. Before
developing an algorithm to find the minimum, we observe a key fact about the functional
$I (a)$ which is differentiability. Now we prove that under certain conditions on $f$ , $I (a)$ is differentiable with respect to $a$ on a neighbourhood of the minimum. This will enable us to implement a gradient method for the minimization.

\begin{theorem}\label{DiffI}
The function $I(a)$ is differentiable on $(0,1)\times (0,2)^2 $ if  $\big|\langle f, \psi_n\rangle\big|<n^{-(1+\alpha+\gamma+\theta)}$ for some $\theta>1$ and $\varphi$ is bounded.
\end{theorem}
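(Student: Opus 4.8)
The plan is to reduce the differentiability of $I$ to that of the reduced forward map $a\mapsto G(a):=u(\beta,\alpha,\gamma)(\cdot,0)$ viewed as a map into $L^2(0,T]$, and then to obtain the latter by term-by-term differentiation of the eigenfunction series \eqref{WkSol} evaluated at $x=0$. Since $\varphi$ is bounded it lies in $L^2(0,T]$, and $I(a)=\|G(a)-\varphi\|_{L^2(0,T]}^2$; as the squared norm on a Hilbert space is smooth, by the chain rule it suffices to show that the three partial derivatives $\partial_\beta G,\partial_\alpha G,\partial_\gamma G$ exist and are continuous on $(0,1)\times(0,2)^2$ (this already gives $I\in C^1$). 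Because $\psi_n(0)=\sin(n\pi/2)$ vanishes for even $n$, we have $G(a)(t)=\sum_{n\ \mathrm{odd}}E_\beta(-\mu_n t^\beta)\langle f,\psi_n\rangle\psi_n(0)$ with $\mu_n=\bar\mu_n^{\alpha/2}+\bar\mu_n^{\gamma/2}$, $\bar\mu_n=(n\pi/2)^2$. Differentiability being a local property, I would fix a point $a_0\in(0,1)\times(0,2)^2$ and a small closed box $K\ni a_0$; it then suffices to prove that each of the three \emph{formally} differentiated series converges in $L^2(0,T]$ uniformly for $a\in K$, for then the standard theorem on differentiating a pointwise-convergent series whose derivative series converges uniformly (with values in the Banach space $L^2(0,T]$) applies, and the partials, being uniform limits of terms depending continuously on $a$ (continuity of $E_\beta$, $E_{\beta,\beta}$ and $\mu_n$ in the parameters), are continuous. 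For the Weierstrass $M$-test on $K$ I will repeatedly use that $\|t^\beta\|_{L^2(0,T]}$ and $\|\ln t\|_{L^2(0,T]}$ are finite and bounded uniformly for $a\in K$, so that only mild singularities at $t=0$ are involved, and that for $a\in K$ one has $n^\alpha\le n^{\alpha_0+\delta}$, $n^\gamma\le n^{\gamma_0+\delta}$; choosing $\delta$ with $2\delta<\theta-1$ the hypothesis $|\langle f,\psi_n\rangle|<n^{-(1+\alpha+\gamma+\theta)}$ still leaves a strictly summable exponent.

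The two \emph{space} derivatives are routine. We have $\partial_\alpha\mu_n=\tfrac12\bar\mu_n^{\alpha/2}\ln\bar\mu_n=O(n^{\alpha}\ln n)$ and likewise $\partial_\gamma\mu_n=O(n^{\gamma}\ln n)$, uniformly on $K$. Using the identity $\frac{d}{dz}E_{\beta,1}(z)=\frac1\beta E_{\beta,\beta}(z)$ together with Lemma \ref{Mtg2}, $\bigl|\frac{d}{d\eta}E_\beta(-\eta)\bigr|\le C(1+\eta)^{-1}$ for $\eta>0$, uniformly for $\beta$ in the $\beta$-projection of $K$. Hence, writing $\eta=\mu_n t^\beta$ and applying the chain rule, the $n$-th term of $\partial_\alpha G(a)(t)$ is bounded in absolute value by $C\,t^\beta(1+\mu_n t^\beta)^{-1}n^{\alpha}\ln n\,|\langle f,\psi_n\rangle|\le C\,t^\beta n^{\alpha}\ln n\,|\langle f,\psi_n\rangle|$, whose $L^2(0,T]$-norm is at most $C\,n^{\alpha}\ln n\,|\langle f,\psi_n\rangle|\le C\,n^{-(1+\gamma+\theta)}\ln n$ by the decay hypothesis. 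Since $\gamma>0$ and $\theta>1$, $\sum_n n^{-(1+\gamma+\theta)}\ln n<\infty$, so the $M$-test gives uniform convergence of the $\partial_\alpha G$ series on $K$; the $\partial_\gamma G$ series is treated identically with $\alpha$ and $\gamma$ interchanged.

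The genuine obstacle is $\partial_\beta G$, because differentiating $E_\beta(-\mu_n t^\beta)=\sum_{k\ge0}(-\mu_n)^k t^{\beta k}/\Gamma(\beta k+1)$ in $\beta$ hits both the power $t^{\beta k}=e^{\beta k\ln t}$ and the Gamma factor, the latter producing digamma weights $\Gamma'(\beta k+1)/\Gamma(\beta k+1)$ that grow like $\ln k$. The part coming from the argument's $\beta$-dependence is tame: it equals $-\mu_n t^\beta\ln t\cdot\frac1\beta E_{\beta,\beta}(-\mu_n t^\beta)$, bounded by $C|\ln t|\,\mu_n t^\beta(1+\mu_n t^\beta)^{-1}\le C|\ln t|$, contributing an $n$-th term $\le C|\ln t|\,|\langle f,\psi_n\rangle|$ with $\sum_n|\langle f,\psi_n\rangle|<\infty$. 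For the remaining ``order'' part, the key step — which I expect to be the main technical difficulty — is the bound $|\partial_\beta E_\beta(-\eta)|\le C\,(1+|\ln\eta|)(1+\eta)^{-1}$ for $\eta>0$, uniformly for $\beta$ in a compact subset of $(0,1)$. I would derive this from the Hankel-loop integral representation $E_\beta(-\eta)=\frac{1}{2\pi i}\int_{\mathrm{Ha}}\frac{e^\zeta\,\zeta^{\beta-1}}{\zeta^\beta+\eta}\,d\zeta$ by differentiating under the contour integral (the factor $\ln\zeta$ that appears does not spoil convergence, and deforming the contour as in the proof of Lemma \ref{Mtg2} produces the $(1+\eta)^{-1}$ decay together with the logarithmic loss), or else cite a known estimate of this type from the parameter-identification literature for fractional diffusion. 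Granting it, with $\eta=\mu_n t^\beta$ and $|\ln\eta|\le|\ln\mu_n|+\beta|\ln t|\le C(\ln n+|\ln t|)$ (for $n\ge1$, handling $n=1$ directly), the $n$-th term of the order-part series is bounded by $C(\ln n+|\ln t|+1)\,|\langle f,\psi_n\rangle|$, with $L^2(0,T]$-norm $\le C(\ln n+1)\,|\langle f,\psi_n\rangle|\le C(\ln n+1)\,n^{-(1+\alpha+\gamma+\theta)}$, and $\sum_n(\ln n+1)\,n^{-(1+\alpha+\gamma+\theta)}<\infty$. The $M$-test then yields uniform convergence of the $\partial_\beta G$ series on $K$; since $K$ (hence $a_0$) was arbitrary, $G\in C^1\!\bigl((0,1)\times(0,2)^2;L^2(0,T]\bigr)$, and the differentiability of $I$ follows, with $\nabla I(a)=2\langle G(a)-\varphi,\ \nabla_a G(a)\rangle_{L^2(0,T]}$.
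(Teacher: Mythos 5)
Your proposal is correct and its overall architecture coincides with the paper's: differentiate the eigenfunction expansion of $u(a)(t,0)$ term by term in each of $\beta,\alpha,\gamma$, control the $\alpha$- and $\gamma$-series via $\partial_\alpha\mu_n=O(n^{\alpha}\ln n)$ together with boundedness of $E_\beta'$ and the decay $|\langle f,\psi_n\rangle|<n^{-(1+\alpha+\gamma+\theta)}$, and isolate the $\beta$-derivative as the delicate case by splitting $\partial_\beta E_\beta(-\mu_nt^\beta)$ into the ``argument'' part carrying $\ln t$ and the ``order'' part carrying the digamma weights. Where you genuinely diverge is in the treatment of that order part: the paper stays with the power series and argues directly that the digamma-weighted sum $B(t)$ is squeezed between multiples of $n^{\gamma}t^{\beta}E_\beta'$ and $n^{2\gamma}t^{2\beta}E_\beta''$, settling for the crude bound $|B(t)|\lesssim n^{2\gamma}$ (which still sums against $n^{-(1+\alpha+\gamma+\theta)}$ only because $\gamma<2\le 1+\alpha+\theta$ under the hypothesis $\theta>1$), whereas you propose the uniform estimate $|\partial_\beta E_\beta(-\eta)|\le C(1+|\ln\eta|)(1+\eta)^{-1}$ obtained from the Hankel-loop representation. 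Your route is sharper (logarithmic rather than polynomial growth in $n$, so it would in fact tolerate a weaker decay hypothesis on the Fourier coefficients), and your $L^2(0,T]$-valued Weierstrass test plus chain rule through the squared Hilbert norm justifies the interchange of differentiation and integration more cleanly than the paper's pointwise $L^1$ domination. The one caveat is that the Hankel-contour bound is the crux of your argument and you only sketch its derivation; it is a true and standard estimate in the fractional-order identification literature, but to make the proof self-contained you should either carry out the differentiation under the contour integral (checking uniformity in $\beta$ on compacts of $(0,1)$ and the behavior as $\eta\downarrow 0$, where the $(1+\eta)^{-1}$ decay is irrelevant but boundedness must still be verified) or give a precise citation.
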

\begin{proof}
Without loss of generality, take $T\equiv1.$ Recall that

\begin{align*}
I(a)= & {\big\|u(\beta, \alpha,\gamma)(t,0)-\varphi(t)\big\|}_{L^2(0,1]}^2\\
= & \int_0^1 u(\beta, \alpha,\gamma)^2(t,0)dt-2\int_0^1 u(\beta, \alpha,\gamma)(t,0)\varphi(t)dt+ \int_0^1 \varphi(t)^2dt.
\end{align*}
We now show that the integrands in the first two integrals are differentiable with respect to $\beta, \alpha$ and $\gamma$ and that the partial derivatives are continuous for each $0<t\leq 1.$\\
\noindent Recall that 
\begin{align*}
u(\beta, \alpha, \gamma)(t,0)=\sum_{\substack{n\geq 1\\ n \ \text{is odd}}} E_\beta(-\mu_nt^\beta)\langle f,  \psi_n\rangle= \sum_{\substack{n\geq 1\\ n \ \text{is odd}}} E_\beta\Bigg(-\Big[\Big(\frac{n\pi}{2}\Big)^\alpha+ \Big(\frac{n\pi}{2}\Big)^\gamma\Big]t^\beta\Bigg)\langle f,  \psi_n\rangle.
\end{align*}

Then 

\begin{equation}\label{ParAp}
\partial_\alpha u(\beta, \alpha, \gamma)(t,0)= -t^{\beta}\sum_{\substack{n\geq 1\\ n \ \text{is odd}}}\Big(\frac{n\pi}{2}\Big)^\alpha\ln\Big(\frac{n\pi}{2}\Big) E_\beta^{'}\Bigg(-\Big[\Big(\frac{n\pi}{2}\Big)^\alpha+ \Big(\frac{n\pi}{2}\Big)^\gamma\Big]t^\beta\Bigg)\langle f,  \psi_n\rangle.
\end{equation}
By Lemma \ref{Mtg3}, the continuity of $t^{\beta}$ on $(0,1]$ and the assumption on $\big|\langle f, \psi_n\rangle\big|$, there exists a positive constant $C$ such that

$$
\Bigg|-t^{\beta}\sum_{\substack{n\geq 1\\ n \ \text{is odd}}}\Big(\frac{n\pi}{2}\Big)^\alpha\ln\Big(\frac{n\pi}{2}\Big) E_\beta^{'}\Bigg(-\Big[\Big(\frac{n\pi}{2}\Big)^\alpha+ \Big(\frac{n\pi}{2}\Big)^\gamma\Big]t^\beta\Bigg)\langle f,  \psi_n\rangle \Bigg|<C \sum_{\substack{n\geq 1\\ n \ \text{is odd}}} \ln\Big(\frac{n\pi}{2}\Big)n^{-(1+\gamma+\theta)}<\infty.
$$
Thus, $\partial_\alpha u(\beta, \alpha, \gamma)(t,0)$ exists for each $t$ and is bounded on $(0,1]$. Recall that  $\varphi$ is also bounded on $(0,1]$. The continuity of $\partial_\alpha u(\beta, \alpha, \gamma)(t,0)$ is straightforward from \eqref{ParAp}.  We conclude that $I(\cdot)$ is differentiable with respect to $\alpha.$  A similar argument  shows that the functional $I(\cdot)$ is also differentiable with respect to $\gamma.$\\
\noindent We now show the differentiability with respect to $\beta.$ Note that 

$$
\partial_\beta u(\beta, \alpha, \gamma)(t,0)= \sum_{\substack{n\geq 1\\ n \ \text{is odd}}}\partial_\beta E_\beta\Bigg(-\Big[\Big(\frac{n\pi}{2}\Big)^\alpha+ \Big(\frac{n\pi}{2}\Big)^\gamma\Big]t^\beta\Bigg)\langle f,  \psi_n\rangle,
$$
with 

\begin{align*}
\partial_\beta E_\beta\Bigg(-\Big[\Big(\frac{n\pi}{2}\Big)^\alpha+ \Big(\frac{n\pi}{2}\Big)^\gamma\Big]t^\beta\Bigg)=& \sum_{k=1}^\infty\left(\frac{k\ln t\Big(-\Big[\Big(\frac{n\pi}{2}\Big)^\alpha+ \Big(\frac{n\pi}{2}\Big)^\gamma\Big]t^\beta\Big) \Big(-\Big[\Big(\frac{n\pi}{2}\Big)^\alpha+ \Big(\frac{n\pi}{2}\Big)^\gamma\Big]t^\beta\Big)^{k-1}}{\Gamma(\beta k+1)}\right)\\
&\qquad\qquad\qquad\qquad- \sum_{k=0}^\infty \left( \frac{k\Gamma'(\beta k+1)\Big(-\Big[\Big(\frac{n\pi}{2}\Big)^\alpha+ \Big(\frac{n\pi}{2}\Big)^\gamma\Big]t^\beta\Big)^{k}}{\Gamma(\beta k+1)^2}\right)\\
= & \underbrace{\Big(-\Big[\Big(\frac{n\pi}{2}\Big)^\alpha+ \Big(\frac{n\pi}{2}\Big)^\gamma\Big]t^\beta\Big)\ln t  E_\beta^{'}\Bigg(-\Big[\Big(\frac{n\pi}{2}\Big)^\alpha+ \Big(\frac{n\pi}{2}\Big)^\gamma\Big]t^\beta\Bigg)}_{:=A(t)}\\
& \qquad\qquad\qquad\qquad-\underbrace{\sum_{k=0}^{\infty} \frac{k\psi_0(1+\beta k)\Big(-\Big[\Big(\frac{n\pi}{2}\Big)^\alpha+ \Big(\frac{n\pi}{2}\Big)^\gamma\Big]t^\beta\Big)^{k}}{\Gamma(\beta k+1)}}_{:=B(t)},
\end{align*}
where $\psi_0(\cdot)= \frac{\Gamma'(\cdot)}{\Gamma(\cdot)}$ is the digamma function. Using Lemma \ref{Mtg3}, there exists some $M>0$ such that 
$$
\big|A(t)\big|<M\Big(\Big[\Big(\frac{n\pi}{2}\Big)^\alpha+ \Big(\frac{n\pi}{2}\Big)^\gamma\Big]t^\beta\Big)\big|\ln t\big| \ \ \text{on}\ \ (0,1].
$$ 
Next, since $\psi_0(1+\beta k)\approx\ln(1+\beta k)\leq k-1$ for sufficiently large $k,$ it must be the case that $B(t)$ remains bounded between some multiple of $n^{\gamma} t^{\beta}E_\beta^{'}\Bigg(-\Big[\Big(\frac{n\pi}{2}\Big)^\alpha+ \Big(\frac{n\pi}{2}\Big)^\gamma\Big]t^\beta\Bigg)$ and some multiple of $n^{2\gamma} t^{2\beta}E_\beta^{''}\Bigg(-\Big[\Big(\frac{n\pi}{2}\Big)^\alpha+ \Big(\frac{n\pi}{2}\Big)^\gamma\Big]t^\beta\Bigg)$; i.e for some $N>0,$

$$
\big|B(t)\big|< N n^{2\gamma} \ \ \ \text{on} \ \ (0,1].
$$

Whence,

\begin{align*}
\Big|\partial_\beta u(\beta, \alpha, \gamma)(t,0)\Big|<& \sum_{\substack{n\geq 1\\ n \ \text{is odd}}}\frac{\big|A(t)\big|+\big|B(t)\big|}{n^{1+\alpha+\gamma+\theta}}\\
<& C \sum_{\substack{n\geq 1\\ n \ \text{is odd}}}\frac{n^\gamma t^\beta \big|\ln t\big|+ n^{2\gamma}}{n^{1+\alpha+\gamma+\theta}}\\
<& C_1 t^{\beta}\big|\ln t\big|+C_2.
\end{align*}
Hence, $\partial_\beta u(\beta, \alpha, \gamma)(t,0)\in L^1(0,1].$ This fact combined with the boundedness of $u(\beta, \alpha, \gamma)(t,0)$ and $\varphi$ imply that the derivative with respect to $\beta$ of $u(\beta, \alpha, \gamma)(t,0)$ exists. Finally, the continuity of the partial derivative with respect to $\beta$ follows from the continuity $A(t)$ and $B(t)$ with respect to $\beta$ for each $t.$  This concludes the proof.
\end{proof}
We address the ill-posedness of our parameter estimation problem by adding a Tikhonov regularization term to the cost functional $I(a)$. It can then be formulated as the constrained nonlinear least squares problem
\begin{equation}\label{NLS}
\min_{a \in \Gamma} \frac{1}{2}\|u(a)(t,0)-\varphi(t)\|^2_{L^2(0,T)} + \frac{\lambda}{2}\|a\|_2^2,
\end{equation}
where $\Gamma = [\beta_0,\beta_1]\times [\alpha_0,\alpha_1]\times [\gamma_0,\gamma_1]$, $\|\cdot\|_2$ denotes the Euclidean norm, and $\lambda>0$ is a suitably chosen regularization parameter. 
The regularization term improves the stability of the minimizer $a_\lambda$ in the presence of measurement noise at the cost of biasing the estimate. Heuristic methods are typically used to choose the parameter $\lambda$ that balances these two errors, the most well-known of which is the Morozov discrepancy principle. Specifically, let $a^*$ be the true parameter value and suppose the measurement error $\varepsilon>0$ is known, i.e. $I(a^*)\leq \varepsilon$. According to the Morozov principle, $\lambda$ should be such that $I(a_\lambda) \approx \varepsilon$, i.e. the regularized solution need only be accurate to within the noise level.

In our numerical computations, we approximate $I(a)$ by a quadrature rule with nodes $0<t_1<...<t_m=T$ and weights $w_1,...,w_m \geq 0$, resulting in
\begin{equation}\label{IAPPROX}
I(a) \approx \frac{1}{2}\sum_{i=1}^m w_i \left[u(a)(t_i,0)-\varphi(t_i)\right]^2
\end{equation}
Defining the weighted residual vector $r(a)=[r_1(a),...,r_m(a)]^T\in \mathbb{R}^m$ componentwise by $r_i(a) = \sqrt{w_i}\left[u(a)(t_i,0)-\varphi(t_i)\right]$, we can approximate Problem \eqref{NLS} by the semi-discretized box-constrained nonlinear least squares problem 
\begin{equation}\label{NLSD}
\min_{a \in \Gamma} F(a) = \frac{1}{2}\|r(a)\|_2^2 + \frac{\lambda}{2}\|a\|_2^2.
\end{equation}  
We solve this problem by a trust-region method with trust region defined in terms of the $\ell_\infty$-norm: At every iteration step $k$, a quadratic model function $m_k(p)$ is constructed to approximate $F(a_k+p)$ within a region of the current iterate $a_k$. This model is then minimized, subject to the intersection of the box constraint $a_k+p\in \Gamma$ and the trust region constraint $\|p\|_\infty \leq R_k$, where $R_k$ is the trust-region radius at the kth step, i.e.
\begin{equation}\label{TRSP}
\min_{p} m_k(p), \qquad \text{subject to}\qquad \|p\|_\infty \leq R_k, \ a_k+p \in \Gamma,  
\end{equation} 
also known as the trust region subproblem. Once the minimizer $p_k$ (or at least an approximation thereof) is found, the decrease predicted by the model is compared with the actual decrease of the functional to determine (i) whether to accept the update $a_{k+1} = a_k + p_k$, and (ii) whether to adjust the trust region radius at the next step. The specifics of the algorithm are given below in Algorithm \ref{TRALG}.

\begin{algorithm}[ht!]
\SetAlgoLined
\KwIn{$a_0$, $R_{\max}$, $R_0\in(0,R_{\max}]$, $\eta\in [0,\frac{1}{4})$}
\For{$k=0,1,...$}{
Compute $r(a_k)$ and $J(a_k)$\;
Determine minimizer $p_k$ of Problem \eqref{TRSP}\;
Evaluate $\displaystyle \rho_k = \frac{F(x_k)-F(x_k+p_k)}{m_k(0)-m_k(p_k)}$\;
\tcc{Update trust region radius}
\uIf{$\rho_k<\frac{1}{4}$}{
$R_{k+1} = \frac{1}{4}R_k$\;}
\uElseIf{$\rho_k>\frac{3}{4}$ and $\|p_k\|_{\infty}=R_k$}{
$R_{k+1}=\min(2R_k, R_{\max})$\;}
\uElse{$R_{k+1}=R_k$\;}
\tcc{Update $a_k$}
\uIf{$\rho_k>\eta$}{
$a_{k+1} = a_k + p_k$}
\uElse{$a_{k+1} = a_k$}
}
\caption{Constrained trust-region least squares algorithm}\label{TRALG}
\end{algorithm}

The quadratic model function is commonly based on the second order Taylor expansion of $F$ about the current iterate. Let $J(a)=[\nabla r_1(a)^T,...,\nabla r_m(a)^T]^T$ be the Jacobian matrix. Then the gradient and Hessian of $F$ are given by 
\begin{align*}
\nabla F(a) &= \sum_{i=1}^m r_i(a) \nabla r_i(a) + \lambda a = J(a)^Tr(a) + \lambda a, \ \ \text{and}\\
\nabla^2 F(a) &= \sum_{i=1}^m \nabla r_i(a)\nabla r_i(a)^T + \sum_{i=1}^m r_i(a) \nabla^2r_i(a) + \lambda I \\
&= J(a)^T J(a) + \lambda I + \sum_{i=1}^m r_i(a) \nabla^2r_i(a).
\end{align*}
To avoid computing the second derivative of the residuals, we make use of the well-known Levenberg-Marquardt approximation
\begin{equation}\label{LM}
\nabla^2 F(a) \approx J(a)^TJ(a) + \lambda I.
\end{equation}

The approximation \eqref{LM} is accurate in general when the residuals are small and/or only slightly nonlinear in $a$. Note, however that a good approximation, while ensuring faster descent, is not necessary for the convergence of this method. Indeed, when the current model does not yield a sufficient decrease in $F$, $\rho_k$ is small and consequently the trust region radius is decreased (see Algorithm \ref{TRALG}), resulting in a smaller region within which the second order terms are less significant. In our numerical experiments we nevertheless found there to be good agreement between the model function and $F$.

We now briefly discuss the solution of the trust region subproblem \eqref{TRSP}. Since both constraints $\|p\|_\infty<R_k$ and $x_k+p \in \Gamma$ amount to componentwise bounds on $p$ (also known as box constraints), their intersection has the same form. We first compute the unconstrained minimizer $\tilde p_k$ for the model function $m_k(p)$ on $\mathbb{R}^3$. By virtue of the regularization term, the approximation \eqref{LM} is always positive definite, ensuring that $\tilde p_k$ exists and is unique. If $\tilde p_k$ satisfies the constraints, then $p_k=\tilde p_k$. Otherwise, we compute the constrained minimizer $p_k$ by projecting $\tilde p_k$ onto the box, thereby fixing at least one component, and minimizing $m_k(p)$ over the lower dimensional box bounding the remaining components. This proceedure is computationally inexpensive, since it does not require us to re-solve the double fractional PDE, and is guaranteed to terminate after at most 3 steps.
 
We terminate the algorithm either (i) when the maximum number of iterations are reached, or (ii) when the norm of the gradient of the Lagrange functional associated with Problem \eqref{NLSD} is within a predetermined tolerance level.   

\section{Numerical examples with noise-free and noisy data}\label{Sect5}
In this section we conduct numerical experiments to explore properties of the minimizer, to investigate the performance of the proposed optimization algorithm, and to determine the effect of measurement noise and the initial guess on the parameter estimates. In each example, we compute the weak solution \eqref{WkSol} $u(a^*)(t,x)$ for a known parameter value $a^*$ and construct the observation data $\varphi(t_i)$, $i=1,...,m$, by adding a uniformly distributed random noise vector, i.e.
\begin{equation}\label{NOISYOBS}
\varphi(t_i) = u(a^*)(t_i,0) + \frac{\delta}{\|u(a^*)\|_{L^2(0,T)}} \xi_i,
\end{equation}
where the perturbations $\xi_i \sim U(-1,1)$ are independent and identically distributed and $\delta \geq 0$. We compute the numerical approximation \eqref{IAPPROX} of $I(a)$ by the trapezoidal rule.

The computation of the weak solution requires estimating the Mittag-Leffler function \eqref{mtglfr} and the components of the initial condition onto the spectral basis, as well as determining an adequate truncation level for approximating the spectral expansion. In Example \ref{EX1}, we eliminate the error caused by the latter two approximations by choosing an initial condition that is a linear combination of eigenfunctions. We examine the effect of the spectral truncation error in Example \ref{EX2}. We use a numerically stable approximation of the Mittag-Leffler function, examined in \cite{Gorenflo2002}. 

The Jacobian function $J$ needed for the quadratic model $m_k(p)$ is approximated by difference quotients. In particular,  
\[
\frac{\partial r_i(a)}{\partial \beta} \approx \frac{u(\beta+d\beta,\alpha,\gamma)(t_i,0)-u(\beta,\alpha,\gamma)(t_i,0)}{d\beta}, \qquad 0<d\beta \ll 1,
\] 
with similar approximations for $\frac{\partial r_i(a)}{\partial \alpha}$ and $\frac{\partial r_i(a)}{\partial \gamma}$. In our computations we choose perturbations $d\beta=d\alpha=d\gamma=10^{-7}$, to ensure sufficient accuracy while avoiding roundoff error. 

\begin{example}\label{EX1}
In this example, we consider Problem \eqref{eq1} with initial condition
\[
f(x) = \cos\left(\frac{\pi}{2}x\right) + \frac{1}{2} \cos\left(\frac{5\pi}{2}x\right).
\] 
The weak solution \eqref{WkSol} can therefore be written explicitly as 
\[
u(t,x) = E_{\beta}\left(-\mu_1 t^\beta \right)\cos\left(\frac{\pi}{2}x\right) + \frac{1}{2}E_{\beta}\left(-\mu_5 t^\beta\right)\cos\left(\frac{5\pi}{2}x\right),
\]
where $\mu_k = \left(\frac{k\pi}{2}\right)^\alpha + \left(\frac{k\pi}{2}\right)^\gamma$ for $k=1,5$. We first test the performance of our algorithm on noiseless observations, i.e. we choose $\delta=0$ in Expression \eqref{NOISYOBS} and regularization parameter $\lambda=10^{-7}$. Figure \ref{EX1_CONV} shows convergence of the algorithm after 6 steps, both in terms of $F(a)$ and the norm of the Lagrangian gradient $\|\nabla L(a,\ell)\|$, while Figure \ref{EX1_REPR} shows the difference between the measurement $\varphi(t)$ and the model output at various iterations.

\begin{figure}[th!]
    \centering
    \begin{subfigure}[t]{0.5\textwidth}
        \centering
        \includegraphics[scale=1]{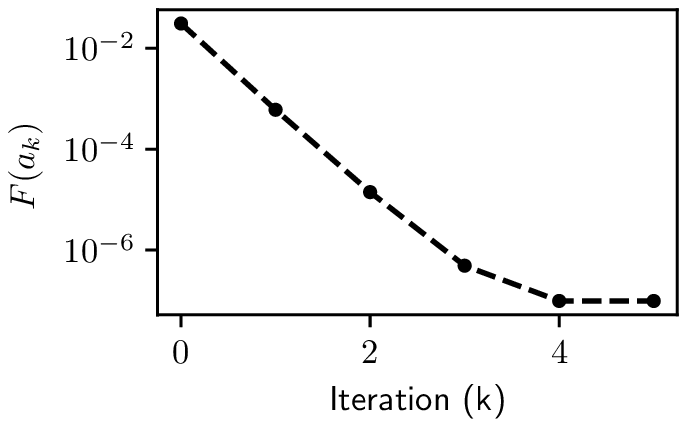}
        \caption{Semilog plot of cost functional.}
    \end{subfigure}%
    ~ 
    \begin{subfigure}[t]{0.5\textwidth}
        \centering
        \includegraphics[scale=1]{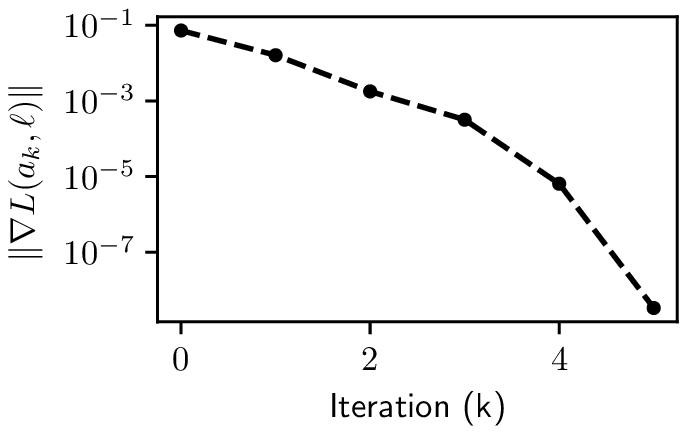}
        \caption{Semilog plot of the norm of the Lagrangian.}
    \end{subfigure}
    \caption{Convergence plots for Example \ref{EX1} in the noisefree case with initial guess $\alpha_0=0.1$, $\beta_0=0.05$, and $\gamma_0=1.7$. The exact parameter values are $\alpha^*=0.6$, $\beta^*=0.4$, and $\gamma^*=1.2$, while the estimated values are $\alpha_5=0.6$, $\beta_5=0.4005$, and $\gamma_5=1.1998$.}\label{EX1_CONV}
\end{figure}

\begin{figure}[ht!]
	\centering
	\includegraphics[scale=1]{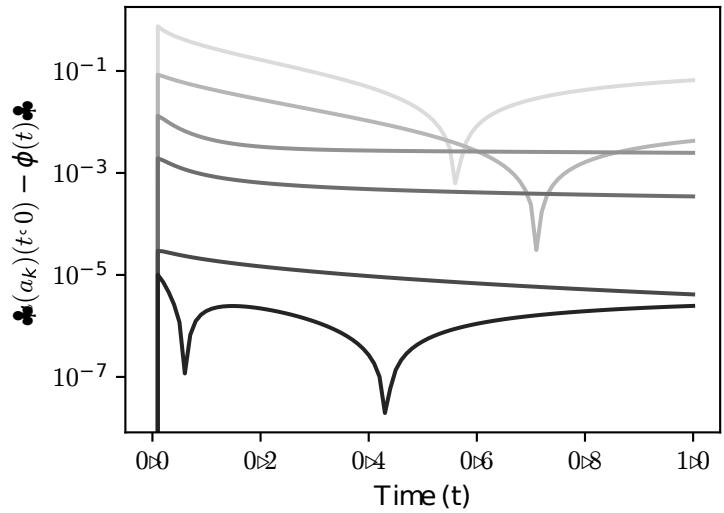}
	\caption{The discrepancy between measurement $\varphi(t)$ and solution $u(a_k)(t,0)$ at various iterations of the optimization algorithm. The shade of the curves indicate the iteration index, with lighter curves representing earlier iterations and darker curves later ones.}\label{EX1_REPR}
\end{figure}

To determine the influence of the initial guess on both the computed minimizer and on the optimization iteration itself, we chose 24 initial guesses $a_0$ randomly within the parameter region $\Gamma$. The convergence behavior for each guess is summarized in Figure \ref{EX1_STARTS}. In all cases, the algorithm converges to the same point within fewer than 9 iterations. 

\begin{figure}[ht!]
	\centering
	\includegraphics[scale=1]{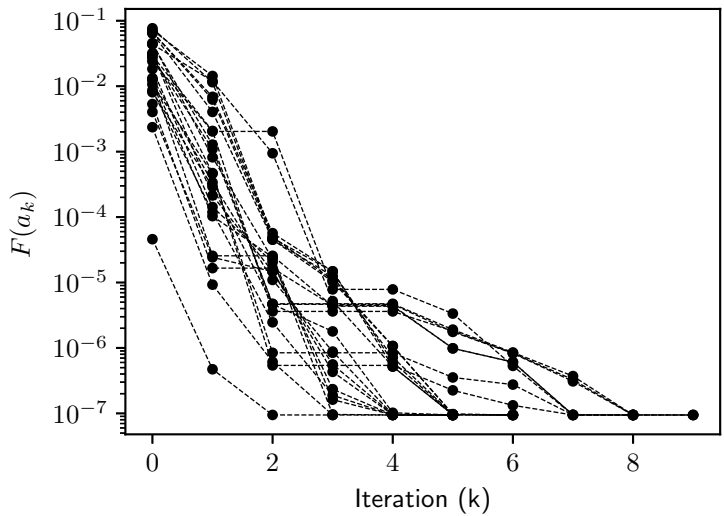}
	\caption{The convergence of the cost functional in Example \ref{EX1} for 24 randomly chosen initial guesses.}\label{EX1_STARTS}
\end{figure}

Next we add a 50\% noise, i.e. $\delta = 0.5$ in Equation \eqref{NOISYOBS} to the observation $\phi(t)$ and investigate the deviation $I(a_\lambda^*)$ as the regularization parameter is decreased. The results are shown in Figure \ref{EX1_NOISE}. 

\begin{figure}[ht!]
\centering
\begin{subfigure}[t]{0.48\textwidth}
\includegraphics[width=\textwidth]{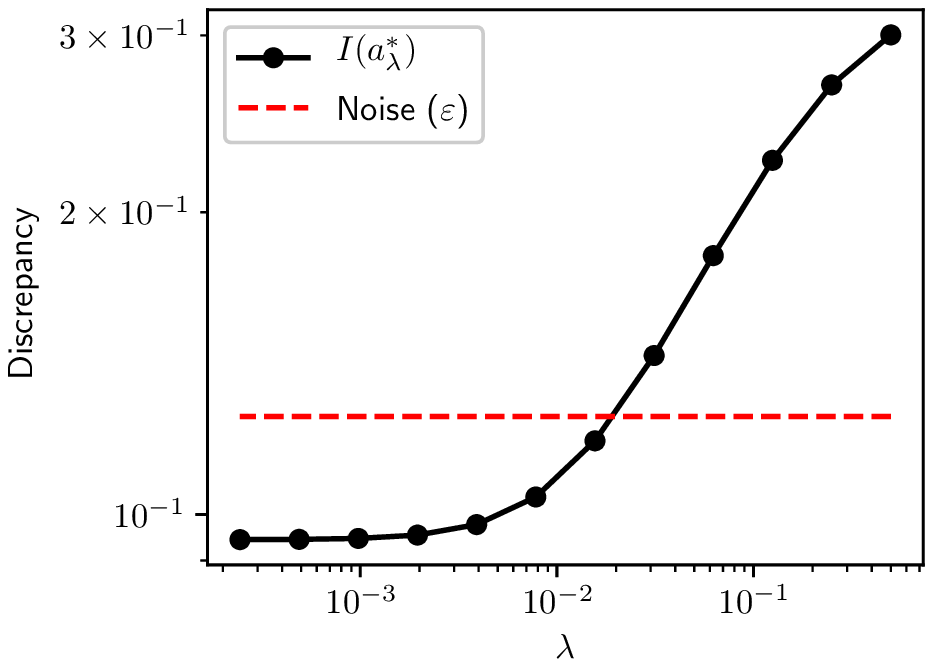}
\caption{Discrepancy $I(a_\lambda^*)$ for various values of $\lambda$.}\label{EX1_OPTREG}
\end{subfigure}~
\begin{subfigure}[t]{0.48\textwidth}
\includegraphics[width=\textwidth]{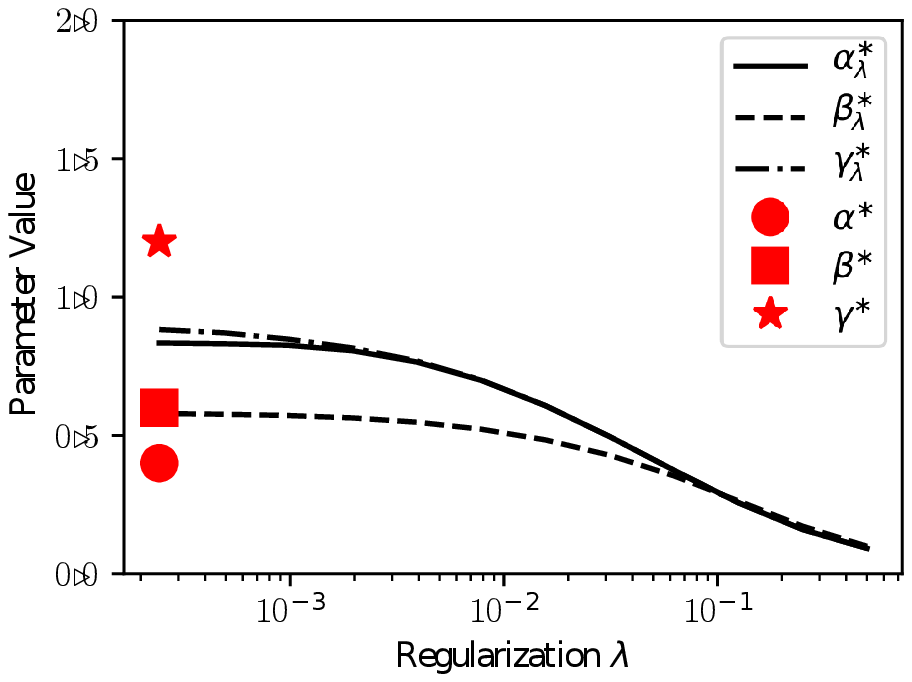}
\caption{Estimated parameter values as a function of the regularization $\lambda$.}\label{EX1_PARVALS}
\end{subfigure}
\caption{Parameter estimation with 50\% noise.}
\label{EX1_NOISE}
\end{figure}

Figure \ref{EX1_OPTREG} shows that the discrepancy between model output and target initially decreases as $\lambda$ is lowered, but flattens off eventually, at a level much larger than in the noiseless case. According to the Morozov discrepancy principle, $\lambda=\frac{1}{64}$ should be chosen as the regularization parameter. Figure \ref{EX1_PARVALS} shows how the optimal parameter values change with $\lambda$. It is interesting to note that, while $\beta$ can be readily identified, the powers of the double fractional Laplacian are not very accurate. The observation can 
can nevertheless be reconstructed well, as shown in Figure \ref{EX1_RECONSTR}.
\begin{figure}[ht!]
\centering
\includegraphics[scale=1]{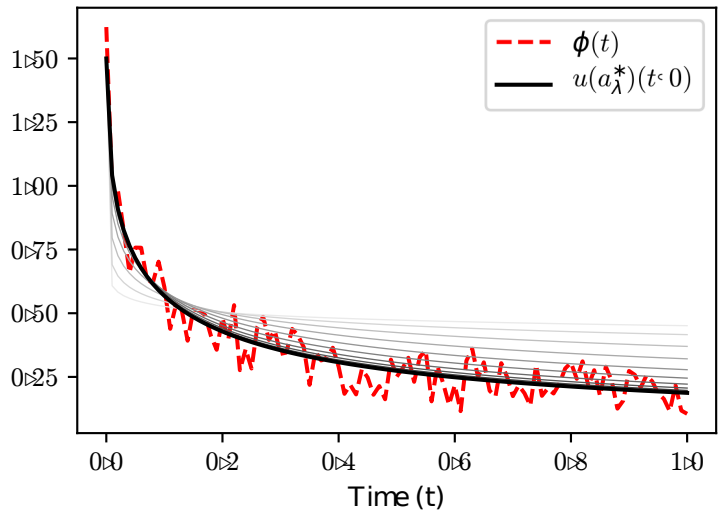}
\caption{Reconstructed data at noise level $\delta=\frac{1}{2}$ as $\lambda$ decreases from $1$ to $2^{-12}$. Lighter shades indicate larger values of $\lambda$.}\label{EX1_RECONSTR}
\end{figure}

\end{example}

\begin{example}\label{EX2}
To investigate the effect of truncation error, we consider Problem \eqref{eq1} with initial condition 
\[
f(x)=e^{-x^2}-e^{-1}.
\]
The addition of the constant term ensures that $f(x)$ satisfies the homogeneous Dirichlet boundary conditions. Unlike before, the spectral expansion is neither finite, nor can its components be computed exactly. We use an adaptive quadrature rule with an error tolerance of $10^{-8}$ to compute the components, so that the truncation level now constitutes the main source of error. In Figure \ref{EX2_TRUNC}, we show how the truncation level affects the accuracy of the noisefree estimates. It is evident from Figure \ref{EX2_RECONSTR} that, even at the true parameter $a^*$,  truncation reduces the accuracy in reconstructing the measurement $\varphi(t)$, with the largest error occurring at smaller values of $t$.
\begin{figure}[ht!]
\centering
\begin{subfigure}[t]{0.45\textwidth}
\includegraphics[width=\textwidth]{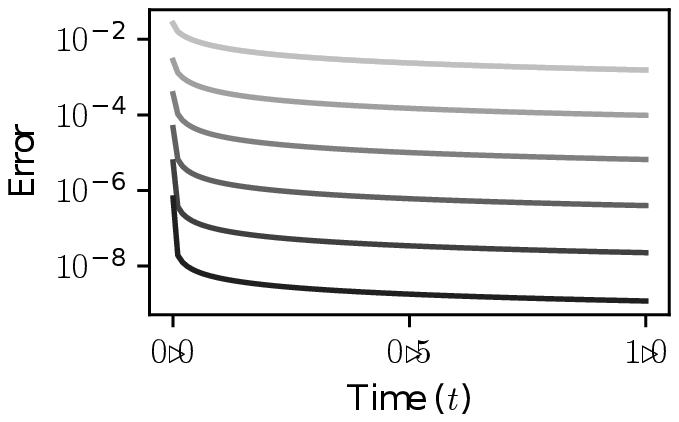}
\caption{Error in reconstructing $\varphi$ at exact parameter values. Darker curves correspond to more expansion terms.}\label{EX2_RECONSTR}
\end{subfigure}~~
\begin{subfigure}[t]{0.45\textwidth}
\includegraphics[width=\textwidth]{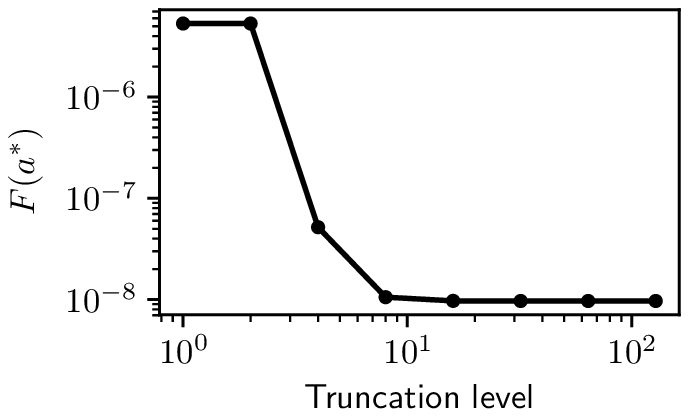}
\caption{Objective function value at minimizer for various truncation levels.}\label{EX2_OBJ}
\end{subfigure}
\caption{Accuracy of the optimal parameter $a^*$ for various truncation levels.}\label{EX2_TRUNC}
\end{figure}

Figure \ref{EX2_OBJ} shows that the objective function at the minimizer decreases to the level of $\lambda$ as the truncation level is increased. Figure \ref{EX2_PARVALS} describes the change in estimated parameter values as the trunctation level increases. While the time-fractional parameter $\beta$ is estimated accurately throughout, the fractional powers of $\alpha$ and $\gamma$ of the double Laplacian are only identified accurately at a sufficiently high truncation level. Interestingly, the minimizers $\alpha^*$ and $\gamma^*$ tend to lie close together at lower truncation levels, similar to the noisy case in Example \ref{EX1} (c.f. Figure \ref{EX1_PARVALS}). This suggests that, in the presence of noise or error, estimating the double fractional Laplacian by a single `average' fractional Laplacian gives a sufficiently good reconstruction of the data.   

\begin{figure}[ht!]
\centering
\includegraphics[scale=1]{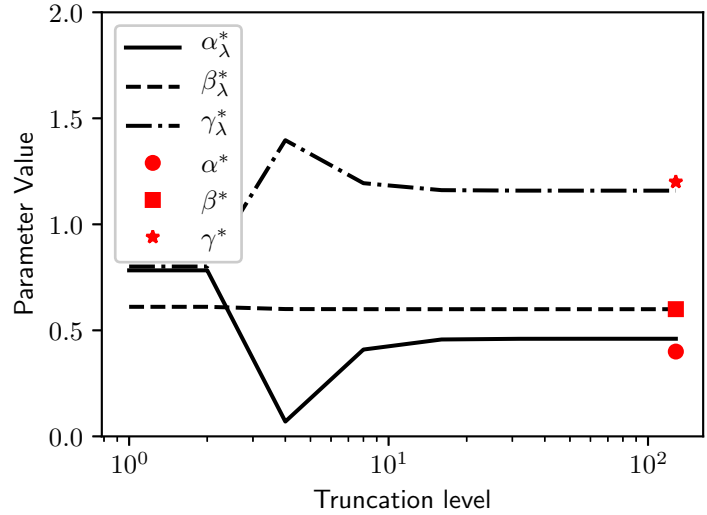}
\caption{Estimated parameter values at various truncation levels.}\label{EX2_PARVALS}
\end{figure}
\end{example}
\newpage
\section{Conclusion}
We have studied a nonlocal inverse problem for the space-time fractional diffusion
$$\partial_t^\beta u(t, x) = -(-\Delta)^{\alpha/2}u(t,x) - (-\Delta)^{\gamma/2}u(t,x) \ \ t\geq 0, \ -1<x<1. $$ After defining the input–output mapping for the inverse
problem, we have proved that the mapping is continuous. By using continuity of the mapping and compactness of the hyperrectangle $[\beta_0,\beta_1]\times[\alpha_0,\alpha_1]\times[\gamma_0,\gamma_1]$,  we have concluded that the minimization problem has a solution. The uniqueness of the solution has been proved for a specific class of the initial functions $f (x)$ using eigenfunction expansion of the solution of the direct problem. For the numerical solution of the inverse problem, a numerical method based on trust-region method and least squares approach are proposed. The numerical algorithm determines the unknowns $\beta, \alpha$ and $\gamma$ simultaneously.



\newpage

\newpage
\Addresses

\end{document}